\newcommand{\klockan}{\the\hours:{\ifnum\minutes<10 0\fi}\the\minutes}
\newcommand{\tid}{\today\ \klockan}
\newcommand{\prtid}{\smash{\raise 10mm \hbox{\LaTeX ed \tid}}}
\renewcommand{\prtid}{}
\makeatletter \pagestyle{headings} \headheight 10pt
\def\sectionmark#1{} 
\def\subsectionmark#1{}
\newcommand{\sectnr}{\ifnum \c@secnumdepth >\z@
                 \thesection.\hskip 1em\relax \fi}
\def\@evenhead{\footnotesize\rm\thepage\hfil\leftmark\hfil\llap{\prtid}}
\def\@oddhead{\footnotesize\rm\rlap{\prtid}\hfil\rightmark\hfil\thepage}
\def\tableofcontents{\section*{Contents} 
 \@starttoc{toc}}
\def\@biblabel#1{#1.}
\let\Thebibliography=\thebibliography
\renewcommand{\thebibliography}[1]{\def\@mkboth##1##2{}\Thebibliography{#1}
\addcontentsline{toc}{section}{References}
\frenchspacing 
\setlength{\@topsep}{0pt}
\setlength{\itemsep}{0pt}%
\setlength{\parskip}{0pt plus 2pt}%
} \makeatother
\def\mdots@{\mathinner.\nonscript\!.%
 \ifx\next,.\else\ifx\next;.\else\ifx\next..\else
 \nonscript\!\mathinner.\fi\fi\fi}
\let\ldots\mdots@
\let\cdots\mdots@
\let\dotso\mdots@
\let\dotsb\mdots@
\let\dotsm\mdots@
\let\dotsc\mdots@
\def\vdots{\vbox{\baselineskip2.8\p@ \lineskiplimit\z@
    \kern6\p@\hbox{.}\hbox{.}\hbox{.}\kern3\p@}}
\def\ddots{\mathinner{\mkern1mu\raise8.6\p@\vbox{\kern7\p@\hbox{.}}%
    \raise5.8\p@\hbox{.}\raise3\p@\hbox{.}\mkern1mu}}
\let\Enumerate=\enumerate
\renewcommand{\enumerate}{\Enumerate%
\setlength{\@topsep}{0pt}
\setlength{\itemsep}{0pt}%
\setlength{\parskip}{0pt plus 1pt}%
\renewcommand{\theenumi}{\textup{(\alph{enumi})}}%
\renewcommand{\labelenumi}{\theenumi}%
}
\let\endEnumerate=\endenumerate
\renewcommand{\endenumerate}{\endEnumerate\unskip}
\def\@seccntformat#1{\csname the#1\endcsname.\quad}
\newcommand{\authortitle}[3]{\author{#1}\markboth{#1}{#2}\ifthenelse{\equal{#3}{}}{\title{#2}}{\title{#3}}}
\newcommand{\art}[6]{{\sc #1, \rm #2, \it #3 \bf #4 \rm (#5), \mbox{#6}.}}
\newcommand{\artnopt}[6]{{\sc #1, \rm #2, \it #3 \bf #4 \rm (#5), \mbox{#6}}}
\newcommand{\auth}[2]{{#1, #2.}}
\newcommand{\artin}[3]{{\sc #1, \rm #2,  in #3.}}
\newcommand{\arttoappear}[3]{{\sc #1, \rm #2, to appear in \it #3}}
\newcommand{\book}[3]{{\sc #1, \it #2, \rm #3.}}
\newcommand{\AND}{{\rm and }}
\newtheoremstyle{descriptive}%
  {\topsep}   
  {\topsep}   
  {\rmfamily} 
  {}          
  {\bfseries} 
  {.}         
  { }         
  {}          
\newtheoremstyle{propositional}%
  {\topsep}   
  {\topsep}   
  {\itshape}  
  {}          
  {\bfseries} 
  {.}         
  { }         
  {}          
\theoremstyle{propositional}
\newtheorem{thm}{Theorem}[section]
\newtheorem{theorem}[thm]{Theorem}  
\newtheorem{proposition}[thm]{Proposition}
\newtheorem{lemma}[thm]{Lemma} 
\newtheorem{corollary}[thm]{Corollary} 
\theoremstyle{descriptive}
\newtheorem{definition}[thm]{Definition}
\renewenvironment{proof}[1][\proofname]{\par
  \pushQED{\qed}%
  \normalfont
  \trivlist
  \item[\hskip\labelsep
        \itshape
    #1\@addpunct{.}]\ignorespaces
}{%
  \popQED\endtrivlist\@endpefalse
} \makeatother
\newcommand{\setm}{\setminus}
\renewcommand{\emptyset}{\varnothing}
\def\vint{\mathop{\mathchoice%
          {\setbox0\hbox{$\displaystyle\intop$}\kern 0.22\wd0%
           \vcenter{\hrule width 0.6\wd0}\kern -0.82\wd0}%
          {\setbox0\hbox{$\textstyle\intop$}\kern 0.2\wd0%
           \vcenter{\hrule width 0.6\wd0}\kern -0.8\wd0}%
          {\setbox0\hbox{$\scriptstyle\intop$}\kern 0.2\wd0%
           \vcenter{\hrule width 0.6\wd0}\kern -0.8\wd0}%
          {\setbox0\hbox{$\scriptscriptstyle\intop$}\kern 0.2\wd0%
           \vcenter{\hrule width 0.6\wd0}\kern -0.8\wd0}}%
          \mathopen{}\int}
\newcommand{\Cp}{{C_p}}
\newcommand{\bCp}{{\itoverline{C}_p}}
\DeclareMathOperator{\Div}{div}
\DeclareMathOperator{\diam}{diam} 
\DeclareMathOperator{\dist}{dist}
\DeclareMathOperator{\Lip}{Lip}
\newcommand{\bdy}{\partial}
\newcommand{\loc}{_{\rm loc}}
{\catcode`p =12 \catcode`t =12 \gdef\eeaa#1pt{#1}}      
\def\accentadjtext#1{\setbox0\hbox{$#1$}\kern   
                \expandafter\eeaa\the\fontdimen1\textfont1 \ht0 }
\def\accentadjscript#1{\setbox0\hbox{$#1$}\kern 
                \expandafter\eeaa\the\fontdimen1\scriptfont1 \ht0 }
\def\accentadjscriptscript#1{\setbox0\hbox{$#1$}\kern   
                \expandafter\eeaa\the\fontdimen1\scriptscriptfont1 \ht0 }
\def\accentadjtextback#1{\setbox0\hbox{$#1$}\kern       
                -\expandafter\eeaa\the\fontdimen1\textfont1 \ht0 }
\def\accentadjscriptback#1{\setbox0\hbox{$#1$}\kern     
                -\expandafter\eeaa\the\fontdimen1\scriptfont1 \ht0 }
\def\accentadjscriptscriptback#1{\setbox0\hbox{$#1$}\kern 
                -\expandafter\eeaa\the\fontdimen1\scriptscriptfont1 \ht0 }
\def\itoverline#1{{\mathsurround0pt\mathchoice
        {\rlap{$\accentadjtext{\displaystyle #1}
                \accentadjtext{\vrule height1.593pt}
                \overline{\phantom{\displaystyle #1}
                \accentadjtextback{\displaystyle #1}}$}{#1}}
        {\rlap{$\accentadjtext{\textstyle #1}
                \accentadjtext{\vrule height1.593pt}
                \overline{\phantom{\textstyle #1}
                \accentadjtextback{\textstyle #1}}$}{#1}}
        {\rlap{$\accentadjscript{\scriptstyle #1}
                \accentadjscript{\vrule height1.593pt}
                \overline{\phantom{\scriptstyle #1}
                \accentadjscriptback{\scriptstyle #1}}$}{#1}}
        {\rlap{$\accentadjscriptscript{\scriptscriptstyle #1}
                \accentadjscriptscript{\vrule height1.593pt}
                \overline{\phantom{\scriptscriptstyle #1}
                \accentadjscriptscriptback{\scriptscriptstyle #1}}$}{#1}}}}
\def\itunderline#1{{\mathsurround0pt\mathchoice
        {\rlap{$\underline{\phantom{\displaystyle #1}
                \accentadjtextback{\displaystyle #1}}$}{#1}}
        {\rlap{$\underline{\phantom{\textstyle #1}
                \accentadjtextback{\textstyle #1}}$}{#1}}
        {\rlap{$\underline{\phantom{\scriptstyle #1}
                \accentadjscriptback{\scriptstyle #1}}$}{#1}}
        {\rlap{$\underline{\phantom{\scriptscriptstyle #1}
                \accentadjscriptscriptback{\scriptscriptstyle #1}}$}{#1}}}}
\def\cprime{{\mathsurround0pt$'$}}
\newcommand{\dmu}{d\mu}
\newcommand{\ds}{ds}
\newcommand{\bdyOmegaX}{\bdy\Omega\setm\{\infty\}}
\newcommand{\clV}{\overline{V}}
\renewcommand{\phi}{\varphi}
\newcommand{\p}{{$p\mspace{1mu}$}}
\newcommand{\R}{\mathbb{R}}
\newcommand{\eR}{{\overline{\R\kern-0.08em}\kern 0.08em}} 
\newcommand{\limplus}{{\mathchoice{\vcenter{\hbox{$\scriptstyle +$}}}
  {\vcenter{\hbox{$\scriptstyle +$}}}
  {\vcenter{\hbox{$\scriptscriptstyle +$}}}
  {\vcenter{\hbox{$\scriptscriptstyle +$}}}
}}
\newcommand{\limminus}{{\mathchoice{\vcenter{\hbox{$\scriptstyle -$}}}
  {\vcenter{\hbox{$\scriptstyle -$}}}
  {\vcenter{\hbox{$\scriptscriptstyle -$}}}
  {\vcenter{\hbox{$\scriptscriptstyle -$}}}
}}
\newcommand{\Lp}{L^p}
\newcommand{\Lploc}{L^{p}\loc}
\newcommand{\Np}{N^{1,p}}
\newcommand{\Nploc}{N^{1,p}\loc}
\newcommand{\Dp}{D^p}
\newcommand{\Dploc}{D^{p}\loc}
\newcommand{\ut}{\tilde{u}}
\newcommand{\uHp}{\itoverline{P}}   
\newcommand{\lHp}{\itunderline{P}}  
\newcommand{\oHp}{H}                
\newcommand{\Hp}{P}                 
\newcommand{\K}{{\mathscr{K}}}
\newcommand{\LL}{\mathscr{L}}%
\newcommand{\U}{\mathscr{U}}%
\DeclareMathOperator*{\essliminf}{ess\,lim\,inf}
\DeclareMathOperator*{\esslimsup}{ess\,lim\,sup}
\DeclareMathOperator*{\essinf}{ess\,inf}
\DeclareMathOperator*{\esssup}{ess\,sup}
\newcommand{\cpesssup}{\text{$\Cp$-}\esssup}
\newcommand{\cplimsup}{\text{$\Cp$-}\esslimsup}
\newcommand{\px}{{\ensuremath{p(\cdot)}}}
\numberwithin{equation}{section}
\newcommand{\imp}{\ensuremath{\Rightarrow} }
\newenvironment{ack}{\medskip{\it Acknowledgement.}}{}
\begin{document}

\authortitle{Anders Bj\"orn and Daniel Hansevi}
{Boundary regularity for \p-harmonic functions 
on unbounded sets in metric spaces}
{Boundary regularity for \p-harmonic functions \\ 
and solutions of obstacle problems \\
on unbounded sets in metric spaces}

\author
{Anders Bj\"orn \\
\it\small Department of Mathematics, Link\"oping University, \\
\it\small SE-581 83 Link\"oping, Sweden\/{\rm ;}
\it \small anders.bjorn@liu.se
\\
\\
Daniel Hansevi \\
\it\small Department of Mathematics, Link\"oping University, \\
\it\small SE-581 83 Link\"oping, Sweden\/{\rm ;}
\it \small daniel.hansevi@liu.se
}

\date{Preliminary version, \today}
\date{}

\maketitle

\noindent{\small
{\bf Abstract}. 
The theory of boundary regularity for \p-harmonic functions is extended to 
unbounded open sets in complete metric spaces with a doubling measure 
supporting a \p-Poincar\'e inequality, 
$1<p<\infty$. 
The barrier classification of regular boundary points is established, 
and it is shown that 
regularity is a local property of the boundary. 
We also obtain boundary regularity results 
for solutions of the obstacle problem on open sets, 
and characterize regularity further 
in several other ways.}

\bigskip

\noindent {\small \emph{Key words and phrases}:
barrier, 
boundary regularity, 
Dirichlet problem, 
doubling measure, 
Kellogg property, 
metric space, 
nonlinear potential theory, 
obstacle problem, 
Perron solution, 
\p-harmonic function, 
Poincar\'e inequality.
}

\medskip

\noindent {\small Mathematics Subject Classification (2010):
Primary: 31E05; Secondary: 30L99, 35J66, 35J92, 49Q20.
}

\section{Introduction}
\label{sec:intro} 
Let $\Omega\subset\R^n$ be a nonempty bounded open set 
and let $f\in C(\bdy\Omega)$. 
The Perron method 
(introduced on $\R^2$ in 1923 by Perron~\cite{Perron23} 
and independently by Remak~\cite{remak}) 
provides a unique function $\Hp f$ 
that is harmonic in $\Omega$ and takes the boundary values $f$ 
in a weak sense, i.e., 
$\Hp f$ is a solution of the Dirichlet problem for the Laplace equation. 
A point $x_0\in\bdy\Omega$ is 
\emph{regular} if 
$\lim_{\Omega\ni y\to x_0}\Hp f(y)=f(x_0)$ for all
$f\in C(\bdy\Omega)$. 
Regular boundary points were characterized in 1924 
by the so-called Wiener criterion and in terms of barriers, 
by Wiener~\cite{Wiener} and Lebesgue~\cite{Lebesgue}, respectively. 

A nonlinear analogue is to consider the 
Dirichlet problem for \p-harmonic functions, 
which are solutions of the \p-Laplace equation 
$\Delta_p u:=\Div(|\nabla u|^{p-2}\,\nabla u)=0$, $1<p<\infty$. 
This leads to a nonlinear potential theory, 
which has been studied since the 1960s, 
initially for $\R^n$, and later generalized to weighted $\R^n$, 
Riemannian manifolds, and other settings. 
For an extensive treatment in weighted $\R^n$, 
the reader may consult the monograph 
Heinonen--Kilpel\"ainen--Martio~\cite{HeKiMa}. 

More recently, 
nonlinear potential theory has been developed on 
complete metric spaces equipped with a doubling measure 
supporting a \p-Poincar\'e inequality, $1<p<\infty$, 
see, e.g., the monograph 
Bj\"orn--Bj\"orn~\cite{BBbook} and the references therein. 
The Perron method was extended to this setting by 
Bj\"orn--Bj\"orn--Shanmugalingam~\cite{BBS2} 
for bounded open sets and Hansevi~\cite{hansevi2} for 
unbounded open sets. 
Note that when $\R^n$ is equipped with a measure $d\mu=w\,dx$,
our assumptions on $\mu$ are equivalent to assuming
that $w$ is \p-admissible as in \cite{HeKiMa}, 
and our definition of \p-harmonic functions is equivalent to 
the one in \cite{HeKiMa},
see Appendix~A.2 in \cite{BBbook}. 

Boundary regularity for \p-harmonic functions on metric spaces 
was first studied by Bj\"orn~\cite{BjIll} and 
Bj\"orn--MacManus--Shan\-mu\-ga\-lin\-gam~\cite{BMS}. 
Bj\"orn--Bj\"orn--Shan\-mu\-ga\-lin\-gam~\cite{BBS} obtained the Kellogg property 
saying that the set 
of irregular boundary points has capacity zero. 
Bj\"orn--Bj\"orn~\cite{BB} 
obtained the barrier characterization, 
showed that regularity is a local property, 
and also studied boundary regularity for obstacle problems showing 
that they have essentially the same regular boundary points 
as the Dirichlet problem. 
These studies were pursued on bounded open sets.

In this paper we study boundary regularity for \p-harmonic functions 
on unbounded sets $\Omega$ in metric spaces $X$ 
(satisfying the assumptions above). 
The boundary $\bdy\Omega$ is considered within the one-point 
compactification $X^*=X\cup\{\infty\}$ of $X$, 
and is in particular always compact. 
We also impose the condition that the capacity $\Cp(X\setm\Omega)>0$. 

In this generality it is not known if continuous functions $f$ 
are resolutive, i.e., whether the upper and lower Perron solutions 
$\uHp_{\Omega}f$ and $\lHp_{\Omega} f$ 
coincide. 
We therefore make the following definition. 
\begin{definition}\label{def:reg}
A boundary point $x_0\in\bdy\Omega$ is \emph{regular} if 
\[
	\lim_{\Omega\ni y\to x_0}\uHp f(y)
	= f(x_0)
	\quad\text{for all }f\in C(\bdy\Omega).
\]
\end{definition}
With a few exceptions, we limit ourselves to studying 
regularity at finite boundary points. 

Our main results can be summarized as follows. 
\begin{theorem}
\label{thm-intro}
Let $x_0\in\bdyOmegaX$  
and let $B=B(x_0,r)$ for some $r>0$. 
\begin{enumerate}
\item\label{intro-kellogg}
The Kellogg property holds\textup{,} i.e., 
$\Cp(I\setm\{\infty\})=0$\textup{,} 
where $I$ is the set of irregular boundary points. 
\item\label{intro-barrier}
$x_0$ is regular if and only if there is a barrier at $x_0$. 
\item\label{intro-local}
Regularity is a local property\textup{,} i.e., 
$x_0$ is regular with respect to $\Omega$ if and only 
if it is regular with respect to $B\cap\Omega$. 
\end{enumerate}
\end{theorem}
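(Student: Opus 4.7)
The plan is to prove the three statements in the order (b), (c), (a), since the barrier characterization is the cornerstone that makes both locality and Kellogg tractable, and Kellogg for unbounded $\Omega$ will be reduced to the already-known bounded case via locality. Throughout, the heuristic is that all of the relevant notions (regularity, barriers, irregular sets) should depend only on the behaviour of $\Omega$ in an arbitrarily small neighbourhood of the finite boundary point $x_0$, so adding or removing parts of $\Omega$ far from $x_0$ should not matter. The main conceptual tool is the comparison principle for \p-superharmonic functions together with the pasting lemma (gluing a \p-superharmonic function on a subdomain with a constant outside), both of which are known in this generality.

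For (b), the easier implication is that regularity at $x_0$ gives a barrier: take $f(y)=\min\{d(x_0,y),1\}$, extended continuously to $\bdy\Omega$ by $f(\infty)=1$ if $\infty\in\bdy\Omega$. Then $u=\uHp_\Omega f$ is \p-harmonic (hence \p-superharmonic), satisfies $u\ge 0$ by comparison with the zero solution of the obstacle problem, has $u(y)\to f(x_0)=0$ as $y\to x_0$ by regularity, and has $\liminf_{y\to z}u(y)\ge c>0$ at all other finite boundary points near $x_0$ by lower semicontinuity and the Kellogg-type arguments at those points. For the converse, given a barrier $u$, fix $f\in C(\bdy\Omega)$, subtract $f(x_0)$, and for each $\varepsilon>0$ use continuity of $f$ to find a ball $B(x_0,\delta)$ where $|f|<\varepsilon$ on $\bdy\Omega$; outside this ball $f$ is dominated by $M=\|f\|_\infty$ and the barrier is bounded below by some $\alpha>0$, so $\varepsilon+(M/\alpha)u$ is an admissible upper function for $f$, forcing $\limsup_{y\to x_0}\uHp_\Omega f(y)\le \varepsilon$; symmetrically for the lower bound via $-f$.

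For (c) I would use the barrier characterization from (b) to translate the statement into: a barrier at $x_0$ exists in $\Omega$ if and only if one exists in $B\cap\Omega$. Restricting a barrier $u$ on $\Omega$ to $B\cap\Omega$ gives a candidate barrier there; the only nontrivial point is lower-boundedness at the new boundary piece $\Omega\cap\bdy B$, which follows from the strong minimum principle together with $u>0$ in $\Omega$. In the opposite direction, given a barrier $v$ on $B'\cap\Omega$ for a slightly smaller ball $B'\Subset B$, set $m=\inf_{\Omega\cap\bdy B'}v>0$ and define $\ut=\min\{v,m\}$ on $B'\cap\Omega$, $\ut=m$ on $\Omega\setm B'$; the pasting lemma makes $\ut$ \p-superharmonic on $\Omega$, and $\ut$ retains the boundary limit $0$ at $x_0$ and positive liminf at every other point of $\bdy\Omega$.

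Finally, (a) follows by exhausting $X$ by balls $B_k=B(x_0^*,k)$ centred at some fixed point: each $\Omega\cap B_k$ is a bounded open set, and for large $k$ its complement has positive capacity since $\Cp(X\setm\Omega)>0$, so the bounded Kellogg property of Bj\"orn--Bj\"orn--Shanmugalingam applies to yield an irregular set of capacity zero; by (c) a finite boundary point $x_0\in B_k\cap\bdy\Omega$ is $\Omega$-irregular iff it is $(\Omega\cap B_k)$-irregular, so $I\setm\{\infty\}$ is a countable union of sets of $\Cp$-measure zero, hence itself has $\Cp$-measure zero by countable subadditivity. The main obstacle I anticipate is the gluing/extension step in (c): the pasting lemma requires the cut-off level $m$ to be taken strictly below the barrier on the interface $\Omega\cap\bdy B'$, and to make this value positive one must know that $v$ stays away from $0$ on that interface, which is where the positivity clause in the definition of a barrier is used. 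Handling this cleanly in the unbounded setting — where one also has to verify that constructions do not silently depend on boundary behaviour at $\infty$ — is the step that will require the most care.
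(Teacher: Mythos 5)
Your overall architecture---barrier characterization first, locality via barriers, then the Kellogg property for unbounded $\Omega$ by covering the boundary with balls and combining locality with the bounded Kellogg property and countable subadditivity---is exactly the paper's. The implication ``barrier $\Rightarrow$ regular'', your treatment of part (c), and your deduction of part (a) are essentially the arguments in the paper; the pasting step in (c) can be made to work with your choice $m=\inf_{\Omega\cap\bdy B'}v$ (via a compactness and lower semicontinuity argument on $\overline{\Omega\cap\bdy B'}$), although the paper instead produces, from the bounded-set theory, a barrier satisfying the quantitative bound $u(x)\geq d(x,x_0)$, which makes the gluing at level $m=\dist(x_0,X\setm B)$ immediate.

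The genuine gap is in the implication ``$x_0$ regular $\Rightarrow$ there is a barrier at $x_0$'', which is the crux of the whole theorem. You propose $u=\uHp_\Omega f$ with $f=\min\{d(\cdot\,,x_0),1\}$ and assert that $\liminf_{\Omega\ni y\to z}u(y)>0$ for every $z\in\bdy\Omega\setm\{x_0\}$ ``by lower semicontinuity and the Kellogg-type arguments at those points''. This is precisely the step that is unproven. At a \emph{regular} point $z$ one does get $\lim_{\Omega\ni y\to z}\uHp f(y)=f(z)>0$, but irregular boundary points $z\neq x_0$ (and the point $\infty$) may exist, and there the upper Perron solution, being an infimum of superharmonic functions, need not satisfy $\liminf_{\Omega\ni y\to z}\uHp f(y)\geq f(z)$: no lower semicontinuity of $\uHp f$ up to the boundary is available, and the Kellogg property only says that such $z$ form a set of capacity zero, not that the positivity condition in the definition of a barrier holds at them. (Invoking the Kellogg property for $\Omega$ itself at this stage would moreover be circular, since you derive part (a) from part (c), which you derive from part (b).) This difficulty is exactly why the paper abandons Perron solutions here and instead uses solutions of obstacle problems with obstacle $d_{x_0}$ (respectively $h=-\min\{d(\cdot\,,x_0),r\}$ on $\Omega\cup 2B$): the constraint $u\geq\psi$ q.e.\ yields the positivity condition for free at \emph{all} other boundary points including $\infty$, and the real work---the case distinction $\Cp(\{x_0\})>0$ versus $\Cp(\{x_0\})=0$, restriction of obstacle problems to the contact-free set, the Kellogg property for auxiliary \emph{bounded} sets, and the strong maximum principle---goes into establishing the limit $0$ at $x_0$. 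Without a replacement for this construction your proof of part (b), and hence of parts (c) and (a), is incomplete.
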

Once the barrier characterization 
\ref{intro-barrier} 
has been shown, the locality \ref{intro-local} follows easily. 
Our proofs of these facts are however intertwined, 
and even though we use that these facts are already 
known to hold for bounded open sets, our proof 
is significantly longer than the proof 
in Bj\"orn--Bj\"orn~\cite{BB} (or \cite{BBbook}). 
On the other hand, once \ref{intro-local} has been deduced, 
\ref{intro-kellogg}
follows from its version for bounded domains. 
Several other characterizations 
of regularity are also given, 
see Sections~\ref{sec:bdy-regularity} and~\ref{sec:bdy-reg-further}. 

We also study the associated (one-sided) obstacle problem 
with prescribed boundary values $f$ and an obstacle $\psi$, 
where the solution is required 
to be greater than or equal to $\psi$ q.e.\ in $\Omega$
(i.e., up to a set of capacity zero). 
This problem obviously reduces to the Dirichlet problem for 
\p-harmonic functions when $\psi\equiv-\infty$. 
In Section~\ref{sec:bdy-reg-obst}, 
we show that if $x_0\in\bdyOmegaX$ 
is a regular boundary point 
and $f$ is continuous at $x_0$, 
then the solution $u$ of the obstacle problem 
attains the boundary value at 
$x_0$ in the limit, i.e., 
\[
	\lim_{\Omega\ni y\to x_0}u(y)
	= f(x_0)
\]
if and only if 
$\cplimsup_{\Omega\ni y\to x_0}\psi(y)\leq f(x_0)$. 
The results in  Section~\ref{sec:bdy-reg-obst} 
generalize the corresponding results in 
Bj\"orn--Bj\"orn~\cite{BB} to unbounded sets, 
with some improvements also for bounded sets. 
These results are new even on unweighted $\R^n$. 

Boundary regularity for \p-harmonic functions 
on $\R^n$ was first studied 
by Maz{\cprime}ya~\cite{Maz70} who obtained the 
sufficiency part of the Wiener criterion in 1970. 
Later on the full Wiener criterion has been obtained 
in various situations including weighted $\R^n$ and for Cheeger \p-harmonic 
functions on metric spaces, 
see \cite{KiMa94}, \cite{Lind-Mar}, \cite{Mikkonen},
and \cite{JB-Matsue}. 
The full Wiener criterion 
for \p-harmonic functions defined using upper gradients 
remains open even for bounded open sets in 
metric spaces (satisfying the assumptions above), 
but the sufficiency has been obtained, 
see \cite{BMS} and \cite{JB-pfine}, 
and a weaker necessity condition, see \cite{JBCalcVar}. 
An important consequence of 
Theorem~\ref{thm-intro}\,\ref{intro-local} 
is that the sufficiency part of the Wiener criterion 
holds for unbounded open sets. 
(Hence also the porosity-type conditions in 
Corollary~11.25 in \cite{BBbook} 
imply regularity for unbounded open sets.) 

In nonlinear potential theory, the Kellogg property was first obtained by 
Hedberg~\cite{Hedb} and Hedberg--Wolff~\cite{HedWol} on $\R^n$ 
(see also Kilpel\"ainen~\cite{Kilp89}). 
It was 
extended to homogeneous spaces by Vodop{\cprime}\-yanov~\cite{Vodopyanov89}, 
to weighted $\R^n$ by 
Hei\-no\-nen--Kil\-pe\-l\"ai\-nen--Martio~\cite{HeKiMa}, 
to subelliptic equations by 
Markina--Vodop{\cprime}yanov~\cite{MV2}, 
and to bounded open sets in metric spaces by 
Bj\"orn--Bj\"orn--Shan\-mu\-ga\-lin\-gam~\cite{BBS}. 
In some of these papers boundary regularity was defined 
in a different way than through Perron solutions, 
but these definitions are now known to be equivalent. 
See also 
\cite{ABB} and \cite{LLT} for the Kellogg property for 
$\px$-harmonic functions on $\R^n$.

Granlund--Lindqvist--Martio~\cite{GLM86} were the first 
to define boundary regularity using Perron solutions for \p-harmonic 
functions, $p\neq 2$. 
They studied the case $p=n$ in $\R^n$ and obtained 
the barrier characterization 
in this case for bounded open sets. 
Kilpel\"ainen~\cite{Kilp89} generalized the barrier characterization 
to $p>1$ and 
also deduced resolutivity for continuous functions. 
The results in \cite{Kilp89} 
covered both  bounded and unbounded open sets
in unweighted $\R^n$,
and were extended 
to weighted $\R^n$ (with a \p-admissible measure) in 
Hei\-no\-nen--Kil\-pe\-l\"ai\-nen--Martio~\cite[Chapter~9]{HeKiMa}.

As already mentioned, the Perron method for \p-harmonic functions 
was extended to 
metric spaces in Bj\"orn--Bj\"orn--Shan\-mu\-ga\-lin\-gam~\cite{BBS2} 
and Hansevi~\cite{hansevi2}. 
It has also been extended 
to other types of boundaries 
in \cite{BBSdir}, \cite{BBSjodin}, \cite{ES}, and \cite{ABcomb}. 
Various aspects of boundary regularity for \p-harmonic functions 
on bounded open sets in metric spaces have 
also been studied in 
\cite{ASh18}, \cite{ABclass}--\cite{BB2} and \cite{BBL2}. 

Very recently, Bj\"orn--Bj\"orn--Li~\cite{BBLi} studied Perron solutions and boundary regular for \p-harmonic functions on unbounded open sets in 
Ahlfors regular metric spaces. 
There is some overlap with the results in this paper, 
but it is not substantial and 
here we consider more general metric spaces than in \cite{BBLi}.

\begin{ack}
The first author was supported by the Swedish Research Council,
grant 2016-03424.
\end{ack}

\section{Notation and preliminaries}
\label{sec:prel} 
We assume that $(X,d,\mu)$ 
is a metric measure space (which we simply refer to as $X$) 
equipped with a metric $d$ and a 
positive complete Borel measure $\mu$ such that 
$0<\mu(B)<\infty$ 
for every ball $B\subset X$. 
It follows that $X$ is separable, second countable, and Lindel\"of 
(these properties are equivalent for metric spaces). 
For balls 
$B(x_0,r):=\{x\in X:d(x,x_0)<r\}$, 
we let $\lambda B=\lambda B(x_0,r):=B(x_0,\lambda r)$ 
for $\lambda>0$. 
The $\sigma$-algebra 
on which $\mu$ is defined 
is the completion of the Borel $\sigma$-algebra. 
We also assume that $1<p<\infty$. 
Later we will impose further requirements 
on the space and on the measure. 
We will keep the discussion short, 
see the monographs
Bj\"orn--Bj\"orn~\cite{BBbook} and
Heinonen--Koskela--Shanmugalingam--Tyson~\cite{HKSTbook} 
for proofs, 
further discussion, 
and references on the topics in this section. 

The measure $\mu$ is 
\emph{doubling} if there exists 
a constant $C\geq 1$ such that 
\[
	0 
	< \mu(2B) 
	\leq C\mu(B) 
	< \infty
\]
for every ball $B\subset X$. 
A metric space is 
\emph{proper} 
if all bounded closed subsets are compact, 
and this is in particular true if the metric space 
is complete and the measure is doubling. 

We use the standard notation 
$f_\limplus=\max\{f,0\}$ and 
$f_\limminus=\max\{-f,0\}$, and 
let $\chi_E$ denote the characteristic function 
of the set $E$. 
Semicontinuous functions are allowed 
to take values in 
$\eR:=[-\infty,\infty]$, 
whereas continuous functions will be assumed to be real-valued 
unless otherwise stated. 
For us, a curve in $X$ is a rectifiable nonconstant continuous mapping 
from a compact interval into $X$, 
and it can thus be parametrized 
by its arc length $\ds$. 

By saying that a property holds for \emph{\p-almost every curve}, 
we mean that it fails only for 
a curve family $\Gamma$ with zero \p-modulus, 
i.e., 
there exists a nonnegative $\rho\in\Lp(X)$ such that 
$\int_\gamma\rho\,\ds=\infty$ for every curve $\gamma\in\Gamma$. 

Following Koskela--MacManus~\cite{KoMac98} 
we make the following definition, 
see also Heinonen--Koskela~\cite{HeKo98}.
\begin{definition}\label{def:upper-gradients}
A measurable function $g\colon X\to[0,\infty]$ is a 
\emph{\p-weak upper gradient} 
of the function $f\colon X\to\eR$ 
if 
\[
	|f(\gamma(0)) - f(\gamma(l_{\gamma}))| 
	\leq \int_{\gamma}g\,\ds
\]
for \p-almost every curve 
$\gamma\colon[0,l_{\gamma}]\to X$, 
where we use the convention that the left-hand side is $\infty$ 
when at least one of the terms on the left-hand side is infinite. 
\end{definition}
Shanmugalingam~\cite{Shanmugalingam00} 
used \p-weak upper gradients to 
define so-called Newtonian spaces. 
\begin{definition}\label{def:Newtonian-space}
The \emph{Newtonian space} on $X$, 
denoted $\Np(X)$, 
is the space of all 
extended real-valued functions $f\in\Lp(X)$ 
such that 
\[
	\|f\|_{\Np(X)} 
	:= \biggl(\int_X|f|^p\,\dmu + \inf_g\int_X g^p\,\dmu\biggr)^{1/p}<\infty, 
\]
where the infimum is taken over all \p-weak upper gradients $g$ of $f$. 
\end{definition}
Shanmugalingam~\cite{Shanmugalingam00} proved that 
the associated quotient space $\Np(X)/\sim$ 
is a Banach space, where $f\sim h$ 
if and only if $\|f-h\|_{\Np(X)}=0$. 
In this paper we assume that functions in $\Np(X)$ 
are defined everywhere (with values in $\eR$), 
not just up to an equivalence class. 
This is important, in particular for the definition of 
\p-weak upper gradients to make sense. 
\begin{definition}\label{def:Dirichlet-space}
An everywhere defined, measurable, extended real-valued function on $X$ 
belongs to the 
\emph{Dirichlet space} $\Dp(X)$ 
if it has a \p-weak upper gradient in $\Lp(X)$. 
\end{definition}
A measurable set $A\subset X$ can be 
considered to be a metric space in its own right 
(with the restriction of $d$ and $\mu$ to $A$). 
Thus the Newtonian space $\Np(A)$ and the Dirichlet space $\Dp(A)$ 
are also given by 
Definitions~\ref{def:Newtonian-space}~and~\ref{def:Dirichlet-space}, 
respectively. 
If $X$ is proper and $\Omega\subset X$ is open, 
then $f\in\Nploc(\Omega)$ 
if and only if 
$f\in\Np(V)$ 
for every open $V$ such that $\clV$ 
is a compact subset of $\Omega$, 
and similarly for $\Dploc(\Omega)$. 
If $f\in\Dploc(X)$, 
then $f$ has a \emph{minimal \p-weak upper gradient} $g_f\in\Lploc(X)$ 
in the sense that $g_f\leq g$ a.e.\ for 
all \p-weak upper gradients $g\in\Lploc(X)$ of $f$. 
\begin{definition}\label{def:capacity}
The (\emph{Sobolev}) \emph{capacity} of a set $E\subset X$ is the number 
\[
	\Cp(E) 
	:= \inf_{f}\|f\|_{\Np(X)}^p,
\]
where the infimum is taken over all 
$f\in\Np(X)$ such that $f\geq 1$ on $E$. 

Whenever a property holds for all points 
except for those in a set of capacity zero, 
it is said to hold \emph{quasieverywhere} (\emph{q.e.}). 
\end{definition}
The capacity is countably subadditive, 
and it is the correct gauge 
for distinguishing between two Newtonian functions: 
If $f\in\Np(X)$, then $f\sim h$ if and only if $f=h$ q.e. 
Moreover, 
if $f,h\in\Nploc(X)$ and $f=h$ a.e., then $f=h$ q.e. 

There is a subtle, but important, difference to the standard 
theory on $\R^n$ where the equivalence classes in the Sobolev space 
are (usually) up to sets of measure zero, while here the equivalence 
classes in $\Np(X)$ are up to sets of capacity zero. 
Moreover, under the 
assumptions from the beginning of Section~\ref{sec:p-harmonic}, 
the functions in $\Nploc(X)$ and $\Nploc(\Omega)$ are quasicontinuous. 
On weighted $\R^n$, the Newtonian space $\Np(X)$ therefore 
corresponds to the refined Sobolev space mentioned on p.~96 in 
Hei\-no\-nen--Kil\-pe\-l\"ai\-nen--Martio~\cite{HeKiMa}. 

In order to be able to compare boundary values 
of Dirichlet and Newtonian functions, 
we need the following spaces. 
\begin{definition}\label{def:Dp0}
For subsets $E$ and $A$ of $X$, 
where $A$ is measurable, 
the \emph{Dirichlet space with zero boundary values in $A\setm E$}, 
is 
\[
	\Dp_0(E;A) 
	:= \{f	|_{E\cap A}:f	\in\Dp(A)\textup{ and }f	=0\textup{ in }A\setm E\}.
\]
The \emph{Newtonian space with zero boundary values} $\Np_0(E;A)$ 
is defined analogously. 
We let $\Dp_0(E)$ and $\Np_0(E)$ denote 
$\Dp_0(E;X)$ and $\Np_0(E;X)$, respectively. 
\end{definition}
The condition ``$f=0$ in $A\setm E$'' 
can in fact be replaced by ``$f=0$ q.e.\ in $A\setm E$'' 
without changing the obtained spaces. 
\begin{definition}\label{def:Poincare-inequality}
We say that $X$ supports a \p-\emph{Poincar\'e inequality} 
if there exist constants, 
$C>0$ and $\lambda\geq 1$ (the dilation constant), 
such that 
\begin{equation}\label{def:Poincare-inequality-ineq}
	\vint_B|f-f_B|\,\dmu
	\leq C\diam(B)\biggl(\vint_{\lambda B}g^p\,\dmu\biggr)^{1/p}
\end{equation}
for all balls $B\subset X$, 
all integrable functions $f$ on $X$, 
and all \p-weak upper gradients $g$ of $f$. 
\end{definition}
In \eqref{def:Poincare-inequality-ineq}, 
we have used the convenient notation 
$f_B:=\vint_B f\,\dmu:=\frac{1}{\mu(B)}\int_B f\,\dmu$. 
Requiring a Poincar\'e inequality to hold is one way of 
making it possible to control functions by their 
\p-weak upper gradients.

\section{The obstacle problem and \texorpdfstring{\boldmath$p\mspace{1mu}$}{p}-harmonic functions}
\label{sec:p-harmonic}
\emph{We assume from now on 
that\/ $1<p<\infty$\textup{,} 
that $X$ is a complete metric measure space 
supporting a \p-Poincar\'e inequality\textup{,} 
that $\mu$ is doubling\textup{,} 
and that\/ $\Omega\subset X$ 
is a nonempty \textup{(}possibly unbounded\textup{)} 
open subset with $\Cp(X\setm\Omega)>0$.}

\medskip

One of our fundamental tools is 
the following obstacle problem, 
which in this generality was first considered by Hansevi~\cite{hansevi1}.
\begin{definition}\label{def:obst}
Let $V\subset X$ be a nonempty open subset with $\Cp(X\setm V)>0$. 
For $\psi\colon V\to\eR$ and $f\in\Dp(V)$, 
let 
\[
	\K_{\psi,f}(V)
	= \{v\in\Dp(V):v-f\in\Dp_0(V)\textup{ and }v\geq\psi\text{ q.e.\ in }V\}. 
\]
We say that $u\in\K_{\psi,f}(V)$ is a 
\emph{solution of the }$\K_{\psi,f}(V)$-\emph{obstacle problem 
\textup{(}with obstacle $\psi$ and boundary values $f$\,\textup{)}} 
if 
\[
	\int_V g_u^p\,\dmu 
	\leq \int_V g_v^p\,\dmu
	\quad\textup{for all }v\in\K_{\psi,f}(V).
\]
When $V=\Omega$, 
we usually denote $\K_{\psi,f}(\Omega)$ by $\K_{\psi,f}$.
\end{definition}
It was proved in Hansevi~\cite[Theorem~3.4]{hansevi1} that 
the $\K_{\psi,f}$-obstacle problem has a unique 
(up to sets of capacity zero) solution 
whenever $\K_{\psi,f}$ is nonempty. 
Furthermore, 
in this case, 
there is a unique lsc-regularized solution of the 
$\K_{\psi,f}$-obstacle problem, 
by Theorem~4.1 in \cite{hansevi1}. 
A function $u$ is \emph{lsc-regularized} if $u=u^*$, 
where the \emph{lsc-regularization} $u^*$ of $u$ is 
defined by 
\[
	u^*(x) 
	= \essliminf_{y\to x}u(y) 
	:= \lim_{r\to 0}\essinf_{B(x,r)}u.
\]
\begin{definition}\label{def:min}
A function $u\in\Nploc(\Omega)$ 
is a \emph{minimizer} in $\Omega$ if 
\begin{equation} \label{eq-minimizer}
	\int_{\phi\neq 0}g_u^p\,\dmu 
	\leq \int_{\phi\neq 0}g_{u+\phi}^p\,\dmu
	\quad\text{for all }\phi\in\Np_0(\Omega). 
\end{equation}
If \eqref{eq-minimizer} only holds for all 
nonnegative $\phi\in\Np_0(\Omega)$, 
then $u$ is a \emph{superminimizer}.

Moreover, 
a function is \emph{\p-harmonic} 
if it is a continuous minimizer. 
\end{definition}
Kinnunen--Shanmugalingam~\cite[Proposition~3.3 and Theorem~5.2]{KiSh01} 
used De Giorgi's method to 
show that every minimizer 
$u$ has a H\"older continuous representative 
$\ut$ such that $\ut=u$ q.e. 
They also obtained the strong maximum principle \cite[Corollary~6.4]{KiSh01} 
for \p-harmonic functions. 
Bj\"orn--Marola~\cite[p.\ 362]{BMarola} obtained the same conclusions 
using Moser iterations. 
See alternatively Theorems~8.13 and 8.14 in \cite{BBbook}. 
Note that $\Nploc(\Omega)=\Dploc(\Omega)$ 
(under our assumptions), 
by Proposition~4.14 in \cite{BBbook}.

If $\psi\colon\Omega\to[-\infty,\infty)$ 
is continuous as an extended real-valued function, 
and $\K_{\psi,f}\neq\emptyset$, 
then the lsc-regularized solution of the $\K_{\psi,f}$-obstacle problem 
is continuous, 
by Theorem~4.4 in Hansevi~\cite{hansevi1}. 
Thus the following definition makes sense. 
It was first used in this generality by 
Hansevi~\cite[Definition~4.6]{hansevi1}.
\begin{definition}\label{def:ext}
Let $V\subset X$ be a nonempty open set with $\Cp(X\setm V)>0$. 
The \emph{\p-harmonic extension} 
$\oHp_V f$ of $f\in\Dp(V)$ to $V$ is the continuous solution 
of the $\K_{-\infty,f}(V)$-obstacle problem. 
When $V=\Omega$, we usually write $\oHp f$ instead of $\oHp_\Omega f$.
\end{definition}
\begin{definition}\label{def:superharm}
A function $u\colon\Omega\to(-\infty,\infty]$ 
is \emph{superharmonic} in $\Omega$ if 
\begin{enumerate}
\renewcommand{\theenumi}{\textup{(\roman{enumi})}}%
\item $u$ is lower semicontinuous; 
\item $u$ is not identically $\infty$ in any component of $\Omega$; 
\item for every nonempty open set 
$V$ such that $\clV$ is a compact subset of $\Omega$ 
and all $v\in\Lip(\clV)$, 
we have $\oHp_{V}v\leq u$ in $V$ 
whenever $v\leq u$ on $\bdy V$. 
\end{enumerate}
A function $u\colon\Omega\to[-\infty,\infty)$ is 
\emph{subharmonic} if $-u$ is superharmonic.
\end{definition}
There are several other equivalent definitions of superharmonic functions, 
see, e.g., 
Theorem~6.1 in Bj\"orn~\cite{ABsuper} 
(or Theorem~9.24 and Propositions~9.25 and~9.26 in \cite{BBbook}). 

An lsc-regularized 
solution of the obstacle problem is always superharmonic, 
by Proposition~3.9 in Hansevi~\cite{hansevi1} 
together with Proposition~7.4 in Kinnunen--Martio~\cite{KiMa02} 
(or Proposition~9.4 in \cite{BBbook}). 
On the other hand, 
superharmonic functions are always lsc-regularized, 
by Theorem~7.14 in Kinnunen--Martio~\cite{KiMa02} 
(or Theorem~9.12 in \cite{BBbook}). 

When proving Theorem~\ref{thm:reg-3} we will need the following generalization 
of Proposition~7.15 in \cite{BBbook}, 
which may be of independent interest. 
\begin{lemma}\label{lem-super-obst-V}
Let $u$ be superharmonic in $\Omega$ 
and let $V\subset\Omega$ be a bounded nonempty open subset such that 
$\Cp(X\setm V)>0$ and $u\in\Dp(V)$. 
Then $u$ is the lsc-regularized solution of the 
$\K_{u,u}(V)$-obstacle problem. 
\end{lemma}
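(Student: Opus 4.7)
The plan is to show directly that $u$ is a solution (not merely that it coincides with the lsc-regularized solution by some external comparison). Since $u\in\K_{u,u}(V)$ trivially ($u\in\Dp(V)$ by hypothesis, $u-u=0\in\Dp_0(V)$, and $u\geq u$), all that requires verification is
\[
\int_V g_u^p\,\dmu\leq\int_V g_w^p\,\dmu\quad\text{for every }w\in\K_{u,u}(V).
\]
Once this is in hand, the uniqueness (up to sets of capacity zero) of solutions to the obstacle problem, together with the fact that superharmonic functions are always lsc-regularized, forces $u$ to be \emph{the} lsc-regularized solution.

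The first reduction is to locally bounded approximants. For $n\in\mathbb{N}$, set $u_n:=\min\{u,n\}$. The minimum of a superharmonic function and a constant is superharmonic, so each $u_n$ is superharmonic in $\Omega$. Being bounded above by $n$ and locally bounded below (all superharmonic functions are), $u_n$ lies in $\Nploc(\Omega)$, and by the standard characterization of superharmonic functions in $\Nploc$, each $u_n$ is a superminimizer in $\Omega$.

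Now fix $w\in\K_{u,u}(V)$ and set $\phi:=w-u$, so that $\phi\in\Dp_0(V)$ and $\phi\geq 0$ q.e.\ in $V$. Since $V$ is bounded, the truncations $\phi_k:=\min\{\phi,k\}$ are bounded, hence lie in $\Lp(V)\cap\Dp_0(V)=\Np_0(V)\subset\Np_0(\Omega)$ (via extension by zero outside $V$). Applying the superminimizer inequality for $u_n$ to the admissible test function $\phi_k$ gives
\[
\int_{\{\phi>0\}}g_{u_n}^p\,\dmu\leq\int_{\{\phi>0\}}g_{u_n+\phi_k}^p\,\dmu.
\]
As $k\to\infty$, the locality of minimal \p-weak upper gradients yields $|g_{u_n+\phi_k}-g_{u_n+\phi}|\leq g_\phi\chi_{\{\phi>k\}}\to 0$ in $\Lp(V)$ (since $g_\phi\in\Lp(V)$), so the inequality persists with $\phi_k$ replaced by $\phi$. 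Next, letting $n\to\infty$, locality also gives $g_{u_n}=g_u\chi_{\{u<n\}}$ and $g_{u_n+\phi}=g_{u+\phi}\chi_{\{u<n\}}+g_\phi\chi_{\{u\geq n\}}$ a.e.\ in $V$; using $g_u,g_\phi\in\Lp(V)$ (whence $g_{u+\phi}\in\Lp(V)$) together with $\mu(\{u=\infty\}\cap V)=0$ (superharmonic functions are finite q.e.), monotone and dominated convergence yield
\[
\int_{\{\phi>0\}}g_u^p\,\dmu\leq\int_{\{\phi>0\}}g_{u+\phi}^p\,\dmu.
\]
On the complementary set $\{\phi=0\}$ one has $w=u$, hence $g_w=g_u$ a.e., so adding the contributions over that set to both sides produces the target inequality $\int_V g_u^p\,\dmu\leq\int_V g_w^p\,\dmu$.

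The chief technical difficulty is the double limiting procedure: since $u$ need not lie in $\Nploc(V)$, the passage from $u_n$ to $u$ has to be done by the explicit splitting $g_{u_n+\phi}=g_{u+\phi}\chi_{\{u<n\}}+g_\phi\chi_{\{u\geq n\}}$ and careful use of the finiteness of $u$ q.e.\ to run dominated convergence on the tails $\{u\geq n\}$. The hypothesis that $V$ is bounded is used precisely in the inner truncation step: it ensures $\Lp(V)\cap\Dp_0(V)=\Np_0(V)$, making the bounded truncations $\phi_k$ admissible test functions in the superminimizer inequality that is available for $u_n$ on $\Omega$.
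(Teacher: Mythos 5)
Your proof is correct, but it takes a genuinely different route from the paper's. The paper restricts to $V$: since $u$ is superharmonic in $V$ and $u\in\Dp(V)\subset\Nploc(V)$, it is already a superminimizer in $V$ (Kinnunen--Martio, Corollary~7.9 and Theorem~7.14); then for $v\in\K_{u,u}(V)$ it tests with $\phi=(v-u)_\limplus\in\Dp_0(V)$, which lies in $\Np_0(V)$ because $V$ is bounded and $X$ supports a \p-Friedrichs inequality (Proposition~2.7 in Bj\"orn--Bj\"orn, \emph{Rev.\ Mat.\ Iberoam.}\ 31), and one application of the superminimizer inequality finishes the proof. You instead stay in $\Omega$, replace $u$ by the locally bounded superminimizers $u_n=\min\{u,n\}$, replace $\phi$ by its bounded truncations $\phi_k$ (using only $\mu(V)<\infty$ to get $\phi_k\in\Np_0$), and recover the inequality by two limit passages via locality of minimal \p-weak upper gradients. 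Your version is more self-contained --- it avoids both the Friedrichs inequality and the full ``superharmonic $+\,\Nploc$ $\Rightarrow$ superminimizer'' statement --- at the price of a more delicate double limit. Two small repairs are needed but harmless: (i) on $\{\phi\geq k\}$ one has $g_{u_n+\phi_k}=g_{u_n}$ a.e., so the correct majorant for $|g_{u_n+\phi_k}-g_{u_n+\phi}|$ is $(2g_{u_n}+g_\phi)\chi_{\{\phi\geq k\}}$ rather than $g_\phi\chi_{\{\phi>k\}}$; this is still an $\Lp(V)$ function tending to $0$ since $\phi$ is finite q.e., so the limit passage survives; (ii) $\phi=w-u$ is nonnegative only q.e., so to have an admissible nonnegative test function one should pass to $\phi_\limplus$ (equivalently, test with $\max\{u,w\}-u$ as the paper does), which changes nothing a.e.
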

The boundedness assumption cannot be dropped. 
To see this, let $1<p<n$ and 
$\Omega=V=\R^n\setm\itoverline{B(0,1)}$ 
in unweighted $\R^n$. 
Then $u(x)= |x|^{(p-n)/(p-1)}$ 
is superharmonic in $\Omega$ and belongs to $\Dp(V)$. 
However, $v\equiv 1$ is the 
lsc-regularized solution of the $\K_{u,u}(V)$-obstacle problem. 
\begin{proof}
Corollary~9.10 in \cite{BBbook} implies that 
$u$ is superharmonic in $V$, 
and hence it follows from 
Corollary~7.9 and Theorem~7.14 in Kinnunen--Martio~\cite{KiMa02} 
(or Corollary~9.6 and Theorem~9.12 in \cite{BBbook}) 
that 
$u$ is an lsc-regularized superminimizer in $V$. 
Because $u\in\Dp(V)$, 
it is clear that 
$u\in\K_{u,u}(V)$. 
Let $v\in\K_{u,u}(V)$ and let $w=\max\{u,v\}$. 
Then $\phi:=w-u=(v-u)_\limplus\in\Dp_0(V)$, 
and since $X$ supports a \p-Friedrichs inequality 
(Definition~2.6 in Bj\"orn--Bj\"orn~\cite{BBnonopen}) and $V$ is bounded, 
we have $\phi\in\Np_0(V)$, 
by Proposition~2.7 in \cite{BBnonopen}. 
Because $v=w$ q.e.\ in $V$, 
it follows from Definition~\ref{def:min} that 
\[
	\int_V g_u^p\,\dmu
	\leq \int_V g_{u+\phi}^p\,\dmu
	= \int_V g_w^p\,\dmu
	= \int_V g_v^p\,\dmu.
\]
Hence $u$ is the lsc-regularized solution of the 
$\K_{u,u}(V)$-obstacle problem. 
\end{proof}

\section{Perron solutions}
\label{sec:perron} 
\emph{In addition to the assumptions given at the 
beginning of Section~\ref{sec:p-harmonic}\textup{,} 
from now on we make the convention that if\/ $\Omega$ is unbounded\textup{,} 
then the point at infinity\textup{,} $\infty$\textup{,} 
belongs to the boundary $\bdy\Omega$. 
Topological notions should therefore be understood with respect to the 
one-point compactification $X^*:=X\cup\{\infty\}$.} 

\medskip

Since continuous functions are assumed to be real-valued, 
every function in $C(\bdy\Omega)$ is bounded 
even if $\Omega$ is unbounded.
\begin{definition}\label{def:Perron}
Given a function $f\colon\bdy\Omega\to\eR$, 
let $\U_f(\Omega)$ be the collection of all 
functions 
$u$ that are superharmonic in $\Omega$, 
bounded from below, and such that 
\[
	\liminf_{\Omega\ni y\to x}u(y)
	\geq f(x)
	\quad\textup{for all }x\in\bdy\Omega.
\]
The \emph{upper Perron solution} of $f$ is defined by 
\[
	\uHp_\Omega f(x) 
	= \inf_{u\in\U_f(\Omega)}u(x), 
	\quad x\in\Omega.
\]
Let $\LL_f(\Omega)$ be the collection of all 
functions 
$v$ that are subharmonic in $\Omega$, 
bounded from above, and such that 
\[
	\limsup_{\Omega\ni y\to x}v(y) 
	\leq f(x) 
	\quad\textup{for all }x\in\bdy\Omega. 
\]
The \emph{lower Perron solution} of $f$ is defined by 
\[
	\lHp_\Omega f(x) 
	= \sup_{v\in\LL_f(\Omega)}v(x), 
	\quad x\in\Omega.
\]
If $\uHp_\Omega f=\lHp_\Omega f$, 
then we denote the common value by $\Hp_\Omega f$. 
Moreover, if $\Hp_\Omega f$ is real-valued, then $f$ is said to be 
\emph{resolutive} (with respect to $\Omega$). 
We often write $\Hp f$ instead of $\Hp_\Omega f$, 
and similarly for $\uHp f$ and $\lHp f$.
\end{definition}
Immediate consequences of the definition are: 
$\lHp f=-\uHp(-f)$ and $\uHp f\leq\uHp h$ 
whenever $f\leq h$ on $\bdy\Omega$. 
If $\alpha\in\R$ and $\beta\geq 0$, 
then $\uHp(\alpha + \beta f)=\alpha+\beta\uHp f$. 
Corollary~6.3 in Hansevi~\cite{hansevi2} shows that $\lHp f\leq\uHp f$. 
In each component of $\Omega$, 
$\uHp f$ is either \p-harmonic or identically $\pm\infty$, 
by Theorem~4.1 in Bj\"orn--Bj\"orn--Shanmugalingam~\cite{BBS2} 
(or Theorem~10.10 in \cite{BBbook}); 
the proof is local and applies 
also to unbounded $\Omega$. 
\begin{definition}\label{def:p-para}
Assume that $\Omega$ is unbounded. 
Then $\Omega$ is \emph{\p-parabolic} if 
for every compact $K\subset\Omega$, 
there exist functions $u_j\in\Np(\Omega)$ such that 
$u_j\geq 1$ on $K$ for all $j=1,2,\ldots$\,, and 
\[
	\int_\Omega g_{u_j}^p\,\dmu 
	\to 0 
	\quad\text{as }j\to\infty. 
\]
Otherwise, 
$\Omega$ is \emph{\p-hyperbolic}. 
\end{definition}
For examples of \p-parabolic sets, see, e.g., 
Hansevi~\cite{hansevi2}. 
The main reason for introducing \p-parabolic sets 
in \cite{hansevi2} 
was to be able to obtain resolutivity results. 
We formulate this in a special case, which will be sufficient for us. 
\begin{theorem}\label{thm-hansevi2-main-res}
\textup{(\cite[Theorem~6.1]{BBS2} 
and~\cite[Theorems~7.5 and~7.8]{hansevi2})} 
Assume that\/ $\Omega$ is bounded or \p-parabolic.

If $f\in C(\bdy\Omega)$\textup{,} then $f$ is resolutive. 

If $f\in\Dp(X)$ and 
$f(\infty)$ is defined\/ \textup{(}with a value in $\eR$\textup{)}\textup{,} 
then $f$ is resolutive and $\Hp f=\oHp f$. 
\end{theorem}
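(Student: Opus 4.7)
The plan is to first prove the $\Dp(X)$ statement and then deduce the continuous case from it by uniform approximation. For $f \in \Dp(X)$ with $f(\infty)$ defined, since $\lHp f \le \uHp f$ is already known (Corollary~6.3 in \cite{hansevi2}), it suffices to sandwich $\oHp f$ between the two Perron solutions: show $\uHp f \le \oHp f$ and $\oHp f \le \lHp f$. For the upper inequality I would approximate the obstacle $-\infty$ from above by lsc obstacles $\psi_k \colon \Omega \to \R$ with $\psi_k \downarrow -\infty$, and let $u_k$ be the lsc-regularized solution of the $\K_{\psi_k,f}$-obstacle problem. Each $u_k$ is superharmonic by Hansevi~\cite{hansevi1}, and $u_k \downarrow \oHp f$ pointwise as $k\to\infty$. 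Once a suitable truncation of $u_k$ is shown to lie in $\U_f(\Omega)$, one obtains $\uHp f \le u_k$ and hence $\uHp f \le \oHp f$ after passing to the limit. The dual argument, using obstacles approaching $f$ from below or passing to $-f$, produces subharmonic competitors in $\LL_f(\Omega)$ and yields $\lHp f \ge \oHp f$.

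The liminf/limsup conditions at finite $x_0 \in \bdyOmegaX$ required to place $u_k$ in $\U_f(\Omega)$ can be reduced to the bounded-domain theory by working inside small balls $B(x_0,r) \cap \Omega$ and comparing with obstacle-problem solutions there, invoking the boundary-regularity results for bounded sets from Bj\"orn--Bj\"orn~\cite{BB} (or Chapter~11 of \cite{BBbook}). The genuinely new difficulty is the point at infinity in the \p-parabolic case. There, Definition~\ref{def:p-para} supplies test functions $v_j \in \Np(\Omega)$ with $v_j \ge 1$ on prescribed exhausting compact sets and $\int_\Omega g_{v_j}^p\,\dmu \to 0$. Using these as small-energy perturbations of $u_k$, for instance forming $\max\{u_k,\,c - \epsilon v_j\}$ with $c$ slightly above $f(\infty)$, one preserves superharmonicity while forcing the liminf at $\infty$ to equal $f(\infty)$; the energetic cost tends to $0$ as $j\to\infty$, and the resulting competitor can then be fed into $\U_f(\Omega)$. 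Combined with the minimizing property of $\oHp f$, sending $\epsilon\to 0$ gives the sandwich inequality.

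For $f \in C(\bdy\Omega)$, compactness of $\bdy\Omega$ in $X^*$ allows uniform approximation of $f$ by restrictions of Lipschitz functions $\phi_n \colon X^* \to \R$, which are constant outside a bounded set and hence lie in $\Dp(X)$. Applying the $\Dp$ statement to each $\phi_n$ and using the stability inequalities $|\uHp f - \uHp \phi_n| \le \sup_{\bdy\Omega}|f - \phi_n|$, together with the analogous bound for $\lHp$, forces $\uHp f$ and $\lHp f$ to agree at a common real value, namely the uniform limit of $\Hp \phi_n = \oHp \phi_n$.

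The step I expect to be the main obstacle is constructing the \p-parabolic perturbation at $\infty$: producing arbitrarily small-energy modifications of $u_k$ that preserve superharmonicity while pinning down the correct liminf at the ideal point. Once this mechanism is in place the argument runs in parallel with the bounded-domain proof in Bj\"orn--Bj\"orn--Shanmugalingam~\cite{BBS2}, and the continuous case is then a routine Banach-space approximation.
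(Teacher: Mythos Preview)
The paper does not prove this theorem at all: it is quoted verbatim from \cite[Theorem~6.1]{BBS2} and \cite[Theorems~7.5 and~7.8]{hansevi2} and used as a black box. So there is no ``paper's own proof'' to compare with; your sketch is an attempt to reconstruct the cited results.

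That said, your sketch has a genuine gap. Approximating the obstacle $-\infty$ from above by $\psi_k\downarrow-\infty$ gains nothing: as soon as $\psi_k<\oHp f$ everywhere (which happens for large $k$ whenever $\oHp f$ is bounded below), the obstacle is inactive and $u_k=\oHp f$. So the sequence $u_k$ is eventually constant, and the real task---placing $\oHp f$ itself (or a truncation) into $\U_f(\Omega)$---is exactly what remains. You propose to verify $\liminf_{\Omega\ni y\to x_0}u_k(y)\ge f(x_0)$ at each finite $x_0$ by localising and invoking the boundary-regularity results of \cite{BB}, but those results give the correct limit only at \emph{regular} boundary points. At irregular points the liminf inequality can fail, so $u_k\notin\U_f(\Omega)$ in general, and the argument stalls. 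This is not a technicality: resolutivity must hold even when $\bdy\Omega$ has many irregular points, so any proof that relies on pointwise boundary regularity everywhere is doomed.

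The proofs in \cite{BBS2} and \cite{hansevi2} proceed differently. For $f\in\Dp(X)$ one works with the obstacle problem $\K_{f,f}$ (obstacle equal to the data, not tending to $-\infty$): its lsc-regularized solution $u$ is superharmonic with $u\ge f$ q.e., and after extending by $f$ outside $\Omega$ one exploits \emph{quasicontinuity} of Newtonian functions to conclude that the liminf condition holds at q.e.\ boundary point. One then needs the separate, nontrivial fact that upper and lower Perron solutions are invariant under perturbation of the boundary data on sets of capacity zero; this absorbs the exceptional set and yields $\uHp f\le u$. Convergence of obstacle solutions then gives $\uHp f\le\oHp f$. The \p-parabolicity hypothesis enters precisely in controlling the behaviour at $\infty$ during this convergence, roughly along the lines you suggest. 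The continuous case is then handled by approximation, as you outline. If you want to reconstruct the proof, the ingredients you are missing are the obstacle $\psi=f$ (not $\psi_k\downarrow-\infty$), quasicontinuity, and the capacity-zero invariance of Perron solutions.
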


Recall from Section~\ref{sec:prel} 
that under our standing assumptions, 
the equivalence classes in $\Dp(X)$ only contain 
quasicontinuous representatives. 
This fact is crucial for the 
validity of the second part of Theorem~\ref{thm-hansevi2-main-res}.

\section{Boundary regularity}
\label{sec:bdy-regularity}
For unbounded \p-hyperbolic sets resolutivity of 
continuous functions is not known, 
which will be an obstacle to overcome in some of our proofs below. 
This explains why regularity
was defined using upper Perron solutions
in Definition~\ref{def:reg}. 
In our definition 
it is not required  that $\Omega$ is bounded,
but if it is, 
then it follows from Theorem~\ref{thm-hansevi2-main-res} that 
it coincides with the definitions of regularity in 
Bj\"orn--Bj\"orn--Shanmugalingam~\cite{BBS}, \cite{BBS2}, 
and 
Bj\"orn--Bj\"orn~\cite{BB}, \cite{BBbook}, 
where regularity is defined using $\Hp f$ or $\oHp f$. 
Thus we can use the boundary regularity results 
from these papers 
when considering bounded sets. 

Since $\uHp f=-\lHp(-f)$, 
the same concept of regularity is obtained if we replace the 
upper Perron solution by the lower Perron solution in 
Definition~\ref{def:reg}.
\begin{theorem}\label{thm:reg}
Let $x_0\in\bdy\Omega$. 
Fix $x_1 \in X$ and define
$d_{x_0}\colon X^*\to[0,1]$ 
by 
\begin{equation}\label{eq-dx0}
	d_{x_0}(x) 
	= \begin{cases}
		\min\{d(x,x_0),1\}
			& \text{when }x\neq\infty, \\
		1 
			& \text{when }x=\infty,
	\end{cases}
\quad \text{if } x_0 \ne \infty,
\end{equation}
and 
\[
	d_\infty(x) 
	= \begin{cases}
		\exp(-d(x,x_1))
			& \text{when }x\neq\infty, \\
		0 
			& \text{when }x=\infty.
	\end{cases}
\]

Then the following are equivalent\/\textup{:} 
\begin{enumerate}
\item\label{reg-reg} 
The point $x_0$ is regular. 
\item\label{reg-dx0}
It is true that 
\[
	\lim_{\Omega\ni y\to x_0}\uHp d_{x_0}(y)
	= 0.
\]
\item\label{reg-semicont-x0}
It is true that 
\[
	\limsup_{\Omega\ni y\to x_0}\uHp f(y)
	\leq f(x_0)
\]
for all $f\colon\bdy\Omega\to[-\infty,\infty)$ 
that are bounded from above on 
$\bdy\Omega$ and upper semicontinuous at $x_0$. 
\item\label{reg-cont-x0}
It is true that 
\[
	\lim_{\Omega\ni y\to x_0}\uHp f(y)
	= f(x_0)
\]
for all $f\colon\bdy\Omega\to\R$ that are 
bounded on $\bdy\Omega$ and continuous at $x_0$. 
\item\label{reg-cont}
It is true that 
\[
	\limsup_{\Omega\ni y\to x_0}\uHp f(y)
	\leq f(x_0)
\]
for all $f\in C(\bdy\Omega)$. 
\end{enumerate}
\end{theorem}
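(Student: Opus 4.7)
The plan is to prove the cyclic chain (a)$\Rightarrow$(e)$\Rightarrow$(b)$\Rightarrow$(c)$\Rightarrow$(d)$\Rightarrow$(a). A preliminary observation is that in both cases $d_{x_0}$ extends to a continuous $[0,1]$-valued function on $X^*$ whose only zero is $x_0$. When $x_0\ne\infty$, properness of $X$ makes $B(x_0,1)$ relatively compact and $d_{x_0}\equiv 1$ off it; when $x_0=\infty$, one has $\exp(-d(x,x_1))\to 0$ as $x\to\infty$ in $X^*$. In particular $d_{x_0}|_{\bdy\Omega}\in C(\bdy\Omega)$.

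The four easy implications go as follows: (a)$\Rightarrow$(e) because (e) asks only for an upper bound while (a) gives a full limit for $f\in C(\bdy\Omega)$; (d)$\Rightarrow$(a) because (d) applies to the strictly larger class of bounded $f$ that are merely continuous at $x_0$; (e)$\Rightarrow$(b) by plugging $f=d_{x_0}$ into (e) (using $d_{x_0}(x_0)=0$) and noting that the constant $0$ lies in $\LL_{d_{x_0}}(\Omega)$, so $\uHp d_{x_0}\geq\lHp d_{x_0}\geq 0$, yielding the matching liminf bound; and (c)$\Rightarrow$(d) by applying (c) to both $f$ and $-f$ (both are usc at $x_0$ when $f$ is continuous there) and combining with $\lHp f=-\uHp(-f)\leq\uHp f$ to upgrade the two one-sided bounds to a genuine two-sided limit.

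The heart of the proof, and the step I expect to be the main obstacle, is (b)$\Rightarrow$(c): one must dominate an arbitrary bounded-above, usc-at-$x_0$ function by a continuous test function built from $d_{x_0}$. Given such an $f$ with $M:=\sup_{\bdy\Omega}f$, fix $\epsilon>0$ and use upper semicontinuity at $x_0$ to choose a neighbourhood $U\subset X^*$ of $x_0$ with $f(x)<f(x_0)+\epsilon$ on $U\cap\bdy\Omega$. The set $\bdy\Omega\setm U$ is compact in $X^*$ and excludes $x_0$, so $\delta:=\min_{\bdy\Omega\setm U}d_{x_0}>0$ by the preliminary observation. With $\alpha:=\max\{0,(M-f(x_0)-\epsilon)/\delta\}$, the continuous function $h:=f(x_0)+\epsilon+\alpha d_{x_0}$ dominates $f$ on all of $\bdy\Omega$ (trivially on $U\cap\bdy\Omega$, and on the complement by the choice of $\alpha$). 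Monotonicity and the affine behaviour of $\uHp$ recorded just after Definition~\ref{def:Perron} then give
\[
	\uHp f\leq\uHp h=f(x_0)+\epsilon+\alpha\uHp d_{x_0},
\]
and (b) yields $\limsup_{\Omega\ni y\to x_0}\uHp f(y)\leq f(x_0)+\epsilon$; letting $\epsilon\to 0$ completes (c). The delicate points are the essential use of compactness of $\bdy\Omega\setm U$ in $X^*$ — which is where continuity of $d_{x_0}$ at $\infty$ genuinely enters and makes the construction survive for unbounded $\Omega$ — and the verification that $h\geq f$ holds globally on $\bdy\Omega$.
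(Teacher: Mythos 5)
Your proposal is correct and follows essentially the same route as the paper: the same cyclic chain through all five statements (merely traversed in a different order, with the paper's trivial (a)$\Rightarrow$(b) replaced by your (a)$\Rightarrow$(e)$\Rightarrow$(b) via $0\in\LL_{d_{x_0}}(\Omega)$), and with the identical key step (b)$\Rightarrow$(c) obtained by dominating $f$ on $\bdy\Omega$ by an affine expression in $d_{x_0}$ using compactness of $\bdy\Omega\setm U$ in $X^*$. The only point to tidy is that (c) allows $f(x_0)=-\infty$, where ``$f(x_0)+\epsilon$'' is meaningless; replace it, as the paper does, by an arbitrary real $\alpha>f(x_0)$ and let $\alpha\to f(x_0)$ at the end.
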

The particular form of $d_{x_0}$ is not important. 
The same characterization 
holds for any nonnegative continuous function 
$d\colon X^*\to[0,\infty)$ which is zero at and only at $x_0$. 
For the later applications in this paper it will also 
be important that $d\in\Dp(X)$, 
which is true for $d_{x_0}$. 
\begin{proof}
\ref{reg-reg} $\imp$ \ref{reg-dx0}
This is trivial.

\smallskip

\ref{reg-dx0} $\imp$ \ref{reg-semicont-x0} 
Fix $\alpha>f(x_0)$. 
Since $f$ is upper semicontinuous at $x_0$, 
there exists an open set $U\subset X^*$ such that $x_0\in U$ and 
$f(x)<\alpha$ for all $x\in U\cap\bdy\Omega$. 
Let $\beta=\sup_{\bdy\Omega}(f-\alpha)_\limplus$ 
and $\delta:=\inf_{\bdy\Omega\setminus U}d_{x_0}>0$. 
(Note that $\delta=\infty$ if $\bdy\Omega\setm U=\emptyset$.) 
Then $\beta<\infty$ and 
$f\leq\alpha+\beta d_{x_0}/\delta$ on $\bdy\Omega$, 
and hence it follows that 
\[
	\limsup_{\Omega\ni y\to x_0}\uHp f(y)
	\leq \alpha+\frac{\beta}{\delta}\lim_{\Omega\ni y\to x_0}\uHp d_{x_0}(y)
	= \alpha.
\]
Letting $\alpha\to f(x_0)$ yields the desired result.

\smallskip

\ref{reg-semicont-x0} $\imp$ \ref{reg-cont-x0} 
Let $f$ be bounded on $\bdy\Omega$ and continuous at $x_0$. 
Both $f$ and $-f$ satisfy the hypothesis in \ref{reg-semicont-x0}. 
The conclusion in \ref{reg-cont-x0} follows as  
\[
	\limsup_{\Omega\ni y\to x_0}\uHp f(y)
	\leq f(x_0)
	\leq -\limsup_{\Omega\ni y\to x_0}\uHp(-f)(y)
	= \liminf_{\Omega\ni y\to x_0}\lHp f(y)
	\leq \liminf_{\Omega\ni y\to x_0}\uHp f(y).
\]

\smallskip

\ref{reg-cont-x0} $\imp$ \ref{reg-cont}
This is trivial.

\smallskip

\ref{reg-cont} $\imp$ \ref{reg-reg} 
This is analogous to the proof of 
\ref{reg-semicont-x0} $\imp$ \ref{reg-cont-x0}. 
\qedhere
\end{proof}
\emph{We will mainly concentrate on the regularity of finite points 
in the rest of the paper.}

\section{Barrier characterization of regular points}
\label{sec:barrier}
\begin{definition}\label{def:barrier}
A function $u$ is a \emph{barrier} (with respect to $\Omega$) 
at $x_0\in\bdy\Omega$ if 
\begin{enumerate}
\renewcommand{\theenumi}{\textup{(\roman{enumi})}}%
\item\label{barrier-i}
$u$ is superharmonic in $\Omega$; 
\item\label{barrier-ii}
$\lim_{\Omega\ni y\to x_0}u(y)=0$; 
\item\label{barrier-iii}
$\liminf_{\Omega\ni y\to x}u(y)>0$ for every $x\in\bdy\Omega\setm\{x_0\}$. 
\end{enumerate}
\end{definition}
Superharmonic functions satisfy the 
strong minimum principle, i.e., 
if $u$ is superharmonic and attains its minimum in some component 
$G$ of $\Omega$, then $u|_G$ is constant 
(see Theorem~9.13 in \cite{BBbook}). 
This implies that a barrier is always nonnegative, 
and furthermore, that a barrier is positive if 
every component $G\subset\Omega$ has a boundary point in $\bdy G\setm\{x_0\}$. 
\begin{theorem}\label{thm:barrier}
If $x_0\in\bdyOmegaX$ and $B$ is a ball such that $x_0\in B$\textup{,} 
then the following are equivalent\/\textup{:}
\begin{enumerate}
\item\label{barrier-reg-Om}
The point $x_0$ is regular.
\item\label{barrier-bar-Om}
There is a barrier at $x_0$.
\item\label{barrier-bar-cont-Om}
There is a positive continuous barrier at $x_0$.
\item\label{barrier-reg-B}
The point $x_0$ is regular with respect to $\Omega\cap B$.
\item\label{barrier-bar-B}
There is a positive 
barrier with respect to $\Omega\cap B$ at $x_0$.
\item\label{barrier-bar-cont-B}
There is a 
continuous barrier $u$ with respect to $\Omega\cap B$ at $x_0$\textup{,} 
such that $u(x)\geq d(x,x_0)$ for all $x\in\Omega\cap B$.
\end{enumerate}
\end{theorem}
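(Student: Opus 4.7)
The plan is as follows. Since $X$ is proper (being complete with a doubling measure) and $B$ is a ball, $\Omega\cap B$ is bounded, so the three conditions \ref{barrier-reg-B}, \ref{barrier-bar-B} and \ref{barrier-bar-cont-B} are already equivalent by the barrier characterization for bounded open sets in Bj\"orn--Bj\"orn~\cite{BB} (or Chapter~11 of \cite{BBbook}). It therefore suffices to link conditions \ref{barrier-reg-Om}--\ref{barrier-bar-cont-Om} to the bounded-case cluster \ref{barrier-reg-B}--\ref{barrier-bar-cont-B}.

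The implication \ref{barrier-bar-cont-Om} $\Rightarrow$ \ref{barrier-bar-Om} is trivial. For \ref{barrier-bar-Om} $\Rightarrow$ \ref{barrier-reg-Om}, I would run the classical barrier argument adapted to the one-point compactification. Given a barrier $u$ and $f\in C(\bdy\Omega)$, fix $\varepsilon>0$ and use continuity of $f$ at $x_0$ to choose an open $U\ni x_0$ with $f<f(x_0)+\varepsilon$ on $U\cap\bdy\Omega$. Since $\bdy\Omega$ is compact in $X^*$ and $\liminf_{\Omega\ni y\to x}u(y)>0$ for every $x\in\bdy\Omega\setm\{x_0\}$ (crucially including $\infty$), a standard covering argument yields $\delta>0$ such that $\liminf_{\Omega\ni y\to x}u(y)\geq\delta$ uniformly on $\bdy\Omega\setm U$. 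Then for $M$ sufficiently large, $f(x_0)+\varepsilon+Mu\in\U_f(\Omega)$, hence $\uHp f\leq f(x_0)+\varepsilon+Mu$; letting $y\to x_0$ and then $\varepsilon\to 0$ verifies the condition in Theorem~\ref{thm:reg}\,\ref{reg-semicont-x0}, which yields regularity.

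The heart of the matter is bridging the global and local barrier conditions. For \ref{barrier-bar-Om} $\Rightarrow$ \ref{barrier-bar-B}, I would simply restrict a positive barrier $u$ for $\Omega$ to $\Omega\cap B$: superharmonicity and the limit at $x_0$ transfer trivially, the liminf condition at points of $\bdy\Omega\cap\overline{B}\setm\{x_0\}$ is inherited, and at points $x\in\Omega\cap\bdy B$ it follows from lower semicontinuity combined with positivity of $u$ in $\Omega$ (strong minimum principle, cf.\ the remark following Definition~\ref{def:barrier}). Conversely, for \ref{barrier-bar-cont-B} $\Rightarrow$ \ref{barrier-bar-cont-Om}, I would extend a continuous barrier $u$ on $\Omega\cap B$ satisfying $u\geq d(\cdot,x_0)$ by choosing $r'>0$ with $\overline{B(x_0,r')}\subset B$ and setting
\[
	v(x) = \begin{cases} \min\{u(x),r'\} & \textup{if } x\in\Omega\cap B(x_0,r'), \\ r' & \textup{if } x\in\Omega\setm B(x_0,r'). \end{cases}
\]
Since $u\geq r'$ on $\Omega\cap\bdy B(x_0,r')$, the two pieces match there and paste (by the standard pasting lemma for superharmonic functions) into a continuous superharmonic function on $\Omega$. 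The limit $v\to 0$ at $x_0$ is inherited from $u$, while $v\equiv r'>0$ in a neighborhood of every $x\in\bdy\Omega\setm\{x_0\}$ lying outside $\overline{B(x_0,r')}$ (notably at $\infty$), and the barrier condition at the remaining finite boundary points inside $B(x_0,r')$ follows from that for $u$.

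The step I expect to be most delicate is the restriction \ref{barrier-bar-Om} $\Rightarrow$ \ref{barrier-bar-B}: ensuring positivity of the restricted barrier on $\Omega\cap\bdy B$ in the presence of possibly exotic components of $\Omega$ that meet $B$ only partially, and whose boundary within $X^*$ might be nearly exhausted by $\{x_0,\infty\}$. This is handled component-by-component via the strong minimum principle together with the standing assumption $\Cp(X\setm\Omega)>0$, but some care is needed to rule out degenerate configurations. Once these linking implications are in place, the full cycle closes through the known bounded-case equivalences on $\Omega\cap B$.
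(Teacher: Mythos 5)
Your argument never closes the cycle: every implication you establish points \emph{into} \ref{barrier-reg-Om}, and none points out of it. You prove \ref{barrier-bar-cont-Om} \imp \ref{barrier-bar-Om} \imp \ref{barrier-reg-Om}, \ref{barrier-bar-Om} \imp \ref{barrier-bar-B}, \ref{barrier-bar-cont-B} \imp \ref{barrier-bar-cont-Om}, and cite the bounded theory for \ref{barrier-reg-B} $\Leftrightarrow$ \ref{barrier-bar-B} $\Leftrightarrow$ \ref{barrier-bar-cont-B}; this makes \ref{barrier-bar-Om}--\ref{barrier-bar-cont-B} mutually equivalent and shows that each implies regularity, but no step takes ``$x_0$ is regular with respect to $\Omega$'' as a hypothesis. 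You cannot get this from the bounded case, because regularity with respect to $\Omega$ is not known a priori to transfer to $\Omega\cap B$: that locality statement is a \emph{consequence} of this theorem (cf.\ Proposition~\ref{prop-subset} and Theorem~\ref{thm-intro}\,\ref{intro-local}), not an input, and here $\uHp_\Omega f$ is defined via Perron solutions on an unbounded set where even resolutivity of continuous functions is open. The paper's proof of \ref{barrier-reg-Om} \imp \ref{barrier-bar-cont-Om} is by far the longest part and is genuinely constructive: when $\Cp(\{x_0\})>0$ one solves the $\K_{d_{x_0},d_{x_0}}$-obstacle problem, observes that on the bounded open set $U=\{u>d_{x_0}\}\subset B(x_0,1)$ one has $u=\uHp_U d_{x_0}$, and invokes the Kellogg property for bounded sets to get $u\to 0$ at $x_0$; when $\Cp(\{x_0\})=0$ one uses outer regularity of $\Cp$ to pick $B$ with $\Cp(X\setm(\Omega\cup 2B))>0$, solves an obstacle problem on $\Omega\cup 2B$ with obstacle $-\min\{d(\cdot,x_0),r\}$, shows via the Kellogg property and the strong maximum principle that the solution is strictly negative off $x_0$, and only then uses the assumed regularity (through Theorem~\ref{thm:reg}) to show that the barrier candidate $\lHp_\Omega w$ vanishes at $x_0$. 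Nothing in your restriction/pasting arguments substitutes for this construction, so the equivalence is not proved.

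There is also a secondary flaw in your \ref{barrier-bar-Om} \imp \ref{barrier-bar-B} step: \ref{barrier-bar-Om} supplies only a barrier, not a positive one, and a barrier may vanish identically on a component $G$ of $\Omega$ with $\bdy G=\{x_0\}$. This can happen when $\Cp(\{x_0\})>0$: take $X=[0,\infty)$, $\Omega=[0,1)\cup(1,2)$, $x_0=1$, and $u=0$ on $[0,1)$, $u(x)=x-1$ on $(1,2)$; then $u$ is a barrier at $x_0$, but its restriction to $\Omega\cap B(1,\tfrac12)$ fails condition \ref{barrier-iii} of Definition~\ref{def:barrier} at $x=\tfrac12$. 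The strong minimum principle and $\Cp(X\setm\Omega)>0$ do not exclude such components (by Lemma~4.3 in \cite{BB} they only force $\Cp(\bdy G)>0$, which $\{x_0\}$ itself may satisfy), so your ``component-by-component'' fix does not go through as described; the paper sidesteps this entirely by proving \ref{barrier-reg-Om} \imp \ref{barrier-bar-cont-Om} first and then restricting only the \emph{positive} continuous barrier, i.e.\ \ref{barrier-bar-cont-Om} \imp \ref{barrier-bar-B}. Your remaining steps do match the paper: \ref{barrier-bar-Om} \imp \ref{barrier-reg-Om} via the lower semicontinuous extension of $u$ to the compact boundary $\bdy\Omega\subset X^*$ (which, as in the paper, works even at $\infty$), and \ref{barrier-bar-cont-B} \imp \ref{barrier-bar-cont-Om} via pasting $\min\{u,m\}$ with a constant, which is exactly the paper's construction with $m=\dist(x_0,X\setm B)$.
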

We first show that parts 
\ref{barrier-bar-cont-Om} to \ref{barrier-bar-cont-B} 
are equivalent, 
and that \ref{barrier-bar-cont-Om} $\imp$ \ref{barrier-bar-Om}
$\imp$ \ref{barrier-reg-Om}.
To conclude the proof we then show that 
\ref{barrier-reg-Om} $\imp$ \ref{barrier-bar-cont-Om},
which is by far the most complicated part of the proof.

In the next section, 
we will use this characterization 
to obtain the Kellogg property for 
unbounded sets. 
In the proof below we will however need the Kellogg property 
for bounded sets, 
which for
metric spaces 
is due to 
Bj\"orn--Bj\"orn--Shanmugalingam~\cite[Theorem~3.9]{BBS}. 
(See alternatively \cite[Theorem~10.5]{BBbook}.) 

We do not know if the corresponding characterizations
of regularity at $\infty$ holds, 
but 
the proof below 
shows that the existence of
a barrier implies regularity also at $\infty$.
\begin{proof} 
\ref{barrier-bar-cont-Om} $\imp$ \ref{barrier-bar-B}
Suppose that $u$ is a positive barrier with respect to $\Omega$ at $x_0$. 
Then $u$ is superharmonic in $\Omega\cap B$, 
by Corollary~9.10 in \cite{BBbook}. 
Clearly, $u$ satisfies condition \ref{barrier-ii} in 
Definition~\ref{def:barrier} 
with respect to $\Omega\cap B$, 
and since $u$ is positive and lower semicontinuous in $\Omega$, 
$u$ also satisfies condition \ref{barrier-iii} 
in Definition~\ref{def:barrier} with respect to $\Omega\cap B$. 
Thus $u$ is a positive 
barrier with respect to $\Omega\cap B$ at $x_0$. 

\smallskip

\ref{barrier-bar-B} $\imp$ \ref{barrier-reg-B} 
This follows from 
Theorem~4.2 in Bj\"orn--Bj\"orn~\cite{BB} 
(or Theorem~11.2 
in \cite{BBbook}). 
Alternatively one can appeal to the proof of 
\ref{barrier-bar-Om} $\imp$ \ref{barrier-reg-Om} 
below.

\smallskip

\ref{barrier-reg-B} $\imp$ \ref{barrier-bar-cont-B} 
This follows from 
Theorem~6.1 in \cite{BB} 
(or Theorem~11.11 
in \cite{BBbook}). 

\smallskip

\ref{barrier-bar-cont-B} $\imp$ \ref{barrier-bar-cont-Om} 
Suppose that $u$ is a continuous barrier with respect to 
$\Omega\cap B$ at $x_0$ such that 
$u(x)\geq d(x,x_0)$ for all $x\in\Omega\cap B$. 
Let $m=\dist(x_0,X\setm B)$ and let 
\[
	v
	= \begin{cases}
		m & \text{in }\Omega\setm B, \\
		\min\{u,m\} & \text{in }\Omega\cap B.
	\end{cases}
\]
Then $v$ is continuous, 
and hence superharmonic in $\Omega$ 
by Lemma~9.3 in \cite{BBbook} 
and the pasting lemma for superharmonic functions, 
Lemma~3.13 in Bj\"orn--Bj\"orn--M\"ak\"a\-l\"ainen--Parviainen~\cite{BBMP} 
(or Lemma~10.27 in \cite{BBbook}). 
Furthermore, 
$v$ clearly satisfies conditions 
\ref{barrier-ii} and~\ref{barrier-iii} 
in Definition~\ref{def:barrier}, 
and is thus a positive continuous barrier 
with respect to $\Omega$ at $x_0$. 

\smallskip

\ref{barrier-bar-cont-Om} $\imp$ \ref{barrier-bar-Om} 
This implication is trivial. 

\smallskip

\ref{barrier-bar-Om} $\imp$ \ref{barrier-reg-Om}
Suppose that $x_0\in\bdy\Omega$.
(Thus we include the case $x_0=\infty$ when
proving this implication.) 
Let $f\in C(\bdy\Omega)$ and fix $\alpha>f(x_0)$. 
Then the set 
$U:=\{x\in\bdy\Omega:f(x)<\alpha\}$ is open relative to
 $\bdy\Omega$, 
and $\beta:=\sup_{\bdy\Omega}(f-\alpha)_\limplus<\infty$. 
Assume that $u$ is a  
barrier at $x_0$,  
and extend $u$ lower semicontinuously to the boundary by letting
\[
	u(x)
	= \liminf_{\Omega\ni y\to x}u(y),
	\quad x\in\bdy\Omega.
\]
Because $u$ is lower semicontinuous and satisfies condition 
\ref{barrier-iii} in Definition~\ref{def:barrier}, 
we have $\delta:=\inf_{\bdy\Omega\setm U}u>0$. 
(Note that $\delta=\infty$ if $\bdy\Omega\setm U=\emptyset$.) 
It follows that 
\[
	f 
	\leq \alpha+\frac{\beta u}{\delta}
	=: h
	\quad\text{on }\bdy\Omega.
\]
Since $h$ is bounded from below and superharmonic, 
we have $h\in\U_{f}$, 
and hence 
$\uHp f\leq h$ in $\Omega$. 
As $u$ is a barrier, 
it follows that 
\[
	\limsup_{\Omega\ni y\to x_0}\uHp f(y)
	\leq \alpha+\frac{\beta}{\delta}\lim_{\Omega\ni y\to x_0}u(y)
	= \alpha.
\]
Letting $\alpha\to f(x_0)$, and appealing to 
Theorem~\ref{thm:reg} 
shows that $x_0$ is regular. 

\smallskip 

\ref{barrier-reg-Om} $\imp$ \ref{barrier-bar-cont-Om}
Assume that $x_0$ is regular. 
We begin with the case when $\Cp(\{x_0\})>0$. 
Let 
$d_{x_0}\in\Dp(X)$ be given by \eqref{eq-dx0}. 
We let $u$ be the continuous solution of the 
$\K_{d_{x_0},d_{x_0}}$-obstacle problem, 
which is superharmonic (see Section~\ref{sec:p-harmonic}) 
and hence satisfies condition~\ref{barrier-i} in 
Definition~\ref{def:barrier}. 
We also extend $u$ to $X$ by letting $u=d_{x_0}$ outside $\Omega$ 
so that $u\in\Dp(X)$. 
Then $0\leq u\leq 1$ (as $0\leq d_{x_0}\leq 1$), 
and thus $U:=\{x\in\Omega:u(x)>d_{x_0}(x)\}\subset B(x_0,1)$. 
Since $u$ and $d_{x_0}$ are continuous, 
we see that $U$ is open and $u=d_{x_0}$ on $\bdy U$. 

Suppose that $x_0\in\bdy U$. 
Proposition~3.7 in Hansevi~\cite{hansevi1} implies that 
$u$ is the continuous solution of the 
$\K_{d_{x_0},d_{x_0}}(U)$-obstacle problem. 
Since $u>d_{x_0}$ in $U$, 
we have 
$u|_U=\oHp_U d_{x_0}$, 
and hence, by 
Theorem~\ref{thm-hansevi2-main-res}, 
\begin{equation}\label{barrier-HP-eq}
	u|_U
	= \oHp_U d_{x_0}
	= \uHp_U d_{x_0}.
\end{equation}
The Kellogg property for bounded sets 
(Theorem~3.9 in Bj\"orn--Bj\"orn--Shan\-mu\-ga\-lin\-gam~\cite{BBS} 
or Theorem~10.5 in \cite{BBbook}) 
implies that $x_0$ is regular with respect to $U$ 
as $\Cp(\{x_0\})>0$. 
It thus follows that 
\[
	\lim_{U\ni y\to x_0}u(y)
	= \lim_{U\ni y\to x_0}\uHp_U d_{x_0}(y)
	= 0.
\]

On the other hand, if $x_0\in\bdy(\Omega\setm U)$, 
then 
\[
	\lim_{\Omega\setm U\ni y\to x_0}u(y)
	= \lim_{\Omega\setm U\ni y\to x_0}d_{x_0}(y)
	= 0,
\]
and hence 
$u(y)\to 0$ as $\Omega\ni y\to x_0$ 
regardless of the position of $x_0$ on $\bdy\Omega$. 
(Note that it is possible that $x_0$ belongs to both 
$\bdy U$ and $\bdy(\Omega\setm U)$.) 
Thus $u$ satisfies condition~\ref{barrier-ii} in 
Definition~\ref{def:barrier}.

Furthermore, $u$ also satisfies condition~\ref{barrier-iii} in 
Definition~\ref{def:barrier}, as 
\[
	\liminf_{\Omega\ni y\to x}u(y)
	\geq \liminf_{\Omega\ni y\to x}d_{x_0}(y)
	= d_{x_0}(x)
	> 0
	\quad\text{for all }x\in\bdy\Omega\setm\{x_0\}.
\]
Thus $u$ is a positive continuous barrier at $x_0$.

\smallskip

Now we consider the case when $\Cp(\{x_0\})=0$. 
As the capacity $\Cp$ is an outer capacity, 
by Corollary~1.3 in Bj\"orn--Bj\"orn--Shanmugalingam~\cite{BBS5} 
(or \cite[Theorem~5.31]{BBbook}), 
$\lim_{r\to 0}\Cp(B(x_0,r))=0$. 
This, together with the fact that 
$\Cp(X\setm\Omega)>0$, 
shows that 
we can find a ball $B:=B(x_0,r)$ 
with sufficiently small radius $r>0$ so that 
$\Cp(X\setm(\Omega\cup 2B))>0$. 
Let $h\colon X\to[-r,0]$ be defined by 
\[
	h(x) 
	= -\min\{d(x,x_0),r\}.
\]
Let $v$ be the continuous solution of the 
$\K_{h,h}(\Omega\cup 2B)$-obstacle problem, 
and extend $v$ to $X$ by letting $v=h$ outside $\Omega\cup 2B$. 
Then $-r\leq h\leq v\leq v(x_0)=0$ in $\Omega\cup 2B$. 
Let $u=\lHp_\Omega w$, 
where 
\begin{equation}\label{barrier-def-w}
	w(x)
	:= \begin{cases}
			-v(x), & x\in\Omega, \\
			-\liminf\limits_{\Omega\ni y\to x}v(y), & x\in\bdy\Omega.
		\end{cases}
\end{equation}
Then $u$ is \p-harmonic, see Section~\ref{sec:perron}, 
and in particular continuous. 
Thus $u$ satisfies condition~\ref{barrier-i} in 
Definition~\ref{def:barrier}. 

Because $x_0$ is regular and $w$ is continuous at $x_0$ and bounded, 
it follows from 
Theorem~\ref{thm:reg} that 
$u$ satisfies condition~\ref{barrier-ii} in 
Definition~\ref{def:barrier}, as 
\[
	\lim_{\Omega\ni y\to x_0}u(y)
	= \lim_{\Omega\ni y\to x_0}\lHp_\Omega w(y)
	= -\lim_{\Omega\ni y\to x_0}\uHp_\Omega(-w)(y)
	= w(x_0)
	= 0.
\]

Let $V=\{x\in\Omega\cup 2B:v(x)>h(x)\}$. 
Clearly, $v=h<0$ in $((\Omega\cup 2B)\setm\{x_0\})\setm V$. 
Suppose that $V\neq\emptyset$ and let $G$ be a component of $V$. 
Then 
\[
	\Cp(X\setm G)
	\geq \Cp(X\setm V)
	\geq \Cp(X\setm(\Omega\cup 2B))
	> 0,
\]
and hence Lemma~4.3 in Bj\"orn--Bj\"orn~\cite{BB} 
(or Lemma~4.5 in \cite{BBbook}) 
implies that $\Cp(\bdy G)>0$. 
Let $B'$ be a sufficiently large ball so that 
$\Cp(B'\cap\bdy G)>0$. 
Since $\Cp(\{x_0\})=0$, 
it follows from 
the Kellogg property for bounded sets 
(Theorem~3.9 in Bj\"orn--Bj\"orn--Shan\-mu\-ga\-lin\-gam~\cite{BBS} 
or Theorem~10.5 in \cite{BBbook}) 
that there is a point 
$x_1\in(B'\cap\bdy G)\setm\{x_0\}$ that is regular with respect to 
$G':=G\cap B'$. 
As in \eqref{barrier-HP-eq} for $U$, 
we have $v|_{G'}=\uHp_{G'}v$, 
and it follows that
\[
	\lim_{G\ni y\to x_1}v(y)
	= \lim_{G'\ni y\to x_1}v(y)
	= \lim_{G'\ni y\to x_1}\uHp_{G'}v(y)
	= v(x_1)
	= h(x_1)
	< 0.
\]
Thus $v\not\equiv 0$ in $G$. 
As $v\leq 0$ is \p-harmonic in $G$ 
(by Theorem~4.4 in Hansevi~\cite{hansevi1}), 
it follows from the strong maximum principle 
(see 
Corollary~6.4 in Kinnunen--Shanmugalingam~\cite{KiSh01} 
or \cite[Theorem~8.13]{BBbook}), 
that $v<0$ in $G$ (and thus also in $V$). 
We conclude that   
$v<0$ in $(\Omega\cup 2B)\setm\{x_0\}$. 

Let $m=\sup_{\bdy B}v$. 
By compactness, we have $-r\leq m<0$. 
Since $v|_{(\Omega\cup 2B)\setm\itoverline{B}}$ 
is the continuous solution of the 
$\K_{h,v}((\Omega\cup 2B)\setm\itoverline{B})$-obstacle problem 
(by Proposition~3.7 in \cite{hansevi1}) 
and $h=-r$ in $(\Omega\cup 2B)\setm\itoverline{B}$, 
we see that 
$\sup_{(\Omega\cup 2B)\setm\itoverline{B}}v=m$. 
It follows that 
\[
	\limsup_{\Omega\ni y\to x}v(y)
	\leq m
	< 0
	\quad\text{for all }x\in\bdy\Omega\setm\itoverline{B}.
\]
Moreover, as $v$ is continuous in $2B$, 
it follows that 
\[
	\limsup_{\Omega\ni y\to x}v(y)
	= v(x)
	< 0
	\quad\text{for all }x\in(\bdy\Omega\cap\itoverline{B})\setm\{x_0\},
\]
and hence  
\[
	\limsup_{\Omega\ni y\to x}v(y)
	< 0
	\quad\text{for all }x\in\bdy\Omega\setm\{x_0\}.
\]

Since $v$ is bounded and superharmonic in $\Omega$, 
defining $w$ in the particular way on $\bdy\Omega$ 
as we did in \eqref{barrier-def-w} 
makes sure that $w\in\LL_w$,  
and hence $u\geq w$ in $\Omega$. 
It follows that $u$ is positive and satisfies 
condition~\ref{barrier-iii} in 
Definition~\ref{def:barrier}, as 
\[
	\liminf_{\Omega\ni y\to x}u(y)
	\geq \liminf_{\Omega\ni y\to x}(-v(y))
	= -\limsup_{\Omega\ni y\to x}v(y)
	> 0
	\quad\text{for all }x\in\bdy\Omega\setm\{x_0\}.
\]
Thus $u$ is a positive continuous barrier at $x_0$. 
\end{proof}

\section{The Kellogg property}
\label{sec:kellogg}
\begin{theorem}\label{thm:kellogg}
\textup{(The Kellogg property)}
If $I$ is the set of 
irregular points in $\bdyOmegaX$\textup{,} 
then $\Cp(I)=0$.
\end{theorem}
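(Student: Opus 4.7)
The plan is to deduce the result from the Kellogg property for bounded open sets (Theorem~3.9 in Bj\"orn--Bj\"orn--Shanmugalingam~\cite{BBS}, cited earlier in the excerpt) by means of the locality statement already established as part of the barrier characterization, namely Theorem~\ref{thm:barrier}\,\ref{barrier-reg-Om}$\Leftrightarrow$\ref{barrier-reg-B}. Recall that $I\subset\bdyOmegaX$, so every point we need to consider is a finite point at which the barrier characterization applies.

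Since $X$ is Lindel\"of, I would cover $X$ by a countable family of open balls $\{B_j\}_{j=1}^{\infty}$. For each $j$, set $\Omega_j=\Omega\cap B_j$, which is a bounded open set. If $\Omega_j=\emptyset$, then $B_j\cap I=\emptyset$ and that index contributes nothing. Otherwise, since $X\setm\Omega\subset X\setm\Omega_j$, the standing assumption $\Cp(X\setm\Omega)>0$ gives $\Cp(X\setm\Omega_j)>0$, so $\Omega_j$ satisfies the running hypotheses and the Kellogg property for bounded sets applies: the set $I_j$ of irregular boundary points of $\Omega_j$ has $\Cp(I_j)=0$.

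Now I claim that $I\cap B_j\subset I_j$ for each $j$. Indeed, let $x_0\in I\cap B_j$. Because $B_j$ is open and $x_0\in\bdy\Omega$, every neighborhood of $x_0$ contained in $B_j$ meets both $\Omega_j$ and $B_j\setm\Omega\subset X\setm\Omega_j$, so $x_0\in\bdy\Omega_j$. By Theorem~\ref{thm:barrier}, the equivalence \ref{barrier-reg-Om}$\Leftrightarrow$\ref{barrier-reg-B} applied with the ball $B_j$ shows that $x_0$ being irregular with respect to $\Omega$ is equivalent to $x_0$ being irregular with respect to $\Omega_j$; hence $x_0\in I_j$, proving the claim.

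Combining the claim with countable subadditivity of $\Cp$ yields
\[
	\Cp(I)
	\leq \sum_{j=1}^{\infty}\Cp(I\cap B_j)
	\leq \sum_{j=1}^{\infty}\Cp(I_j)
	= 0,
\]
which completes the proof. The only step that is not entirely formal is the containment $I\cap B_j\subset I_j$, and this is precisely where the locality of regularity (already the hardest ingredient of Theorem~\ref{thm:barrier}) is used; after that bridge is in place, the passage from the bounded Kellogg property to the unbounded one is essentially bookkeeping.
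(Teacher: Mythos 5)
Your argument is correct and is essentially the paper's own proof: cover by countably many balls, use the locality equivalence \ref{barrier-reg-Om}$\Leftrightarrow$\ref{barrier-reg-B} of Theorem~\ref{thm:barrier} to transfer irregularity to the bounded sets $\Omega\cap B_j$, invoke the bounded Kellogg property, and finish by countable subadditivity. The extra details you supply (that $x_0\in\bdy(\Omega\cap B_j)$ and that $\Cp(X\setm(\Omega\cap B_j))>0$) are correct verifications that the paper leaves implicit.
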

\begin{proof}
Cover $\bdyOmegaX$ by a countable set of balls 
$\{B_j\}_{j=1}^\infty$ and 
let $I_j=I\cap B_j$. 
Furthermore, let $I_j'$ 
be the set of irregular boundary points of $\Omega\cap B_j$, $j=1,2,\ldots$\,. 
Theorem~\ref{thm:barrier} 
(using that $\neg$\ref{barrier-reg-Om} $\imp$ $\neg$\ref{barrier-reg-B}) 
implies that 
$I_j\subset I_j'$. 
Moreover, 
$\Cp(I_j')=0$, by the Kellogg property for bounded sets 
(Theorem~3.9 in Bj\"orn--Bj\"orn--Shan\-mu\-ga\-lin\-gam~\cite{BBS} 
or Theorem~10.5 in \cite{BBbook}). 
Hence $\Cp(I_j)=0$ for all $j$, 
and thus by the subadditivity of the capacity, 
$\Cp(I)=0$. 
\end{proof}
As a consequence of Theorem~\ref{thm:kellogg} 
we obtain the following result, 
which in the bounded case is a direct consequence
of the results in 
Bj\"orn--Bj\"orn--Shan\-mu\-ga\-lin\-gam~\cite{BBS}, \cite{BBS2}.
\begin{theorem}\label{thm:uniq}
If $f\in C(\bdy\Omega)$\textup{,} 
then there exists a 
bounded \p-harmonic function $u$ on $\Omega$ 
such that there is a set $E\subset\bdyOmegaX$ with $\Cp(E)=0$ so that 
\begin{equation}\label{uniq-eq}
	\lim_{\Omega\ni y\to x}u(y)
	= f(x)
	\quad\text{for }x\in\bdy\Omega\setm(E\cup\{\infty\}).
\end{equation}
If moreover\textup{,} $\Omega$ is bounded or \p-parabolic\textup{,}
then $u$ is unique and 
$u=\Hp f$.
\end{theorem}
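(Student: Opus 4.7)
The plan is to take $u := \uHp f$ for the existence part and to argue uniqueness via resolutivity combined with an $\epsilon$-perturbation. Since $\bdy\Omega$ is compact in $X^*$ and $f\in C(\bdy\Omega)$ is bounded, the constants $\sup f$ and $\inf f$ are admissible in $\U_f(\Omega)$ and $\LL_f(\Omega)$ respectively, so $\uHp f$ is real-valued and bounded. The dichotomy noted after Definition~\ref{def:Perron} then makes $u = \uHp f$ \p-harmonic in each component of $\Omega$. Applying the Kellogg property (Theorem~\ref{thm:kellogg}) to the irregular set $I \subset \bdyOmegaX$ and setting $E := I \setm \{\infty\}$ gives $\Cp(E) = 0$; each $x \in \bdy\Omega \setm (E \cup \{\infty\})$ is then a finite regular boundary point, so Definition~\ref{def:reg} yields $\lim_{\Omega \ni y \to x} u(y) = f(x)$, proving \eqref{uniq-eq}.

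Under the hypothesis that $\Omega$ is bounded or \p-parabolic, Theorem~\ref{thm-hansevi2-main-res} gives resolutivity, so $u = \uHp f = \lHp f = \Hp f$. For uniqueness, let $v$ be any bounded \p-harmonic function satisfying \eqref{uniq-eq} with some exceptional set $E_0$ of capacity zero. Replacing $E$ by $E \cup E_0$ (still of capacity zero), the goal becomes to show $v \in \U_f(\Omega) \cap \LL_f(\Omega)$; resolutivity then sandwiches $\lHp f \leq v \leq \uHp f$ and forces $v = \Hp f$. The only obstruction is at points of $E \cup \{\infty\}$, where we have the mere bound $|v| \leq M$ rather than the convergence $\lim v(y) = f(x)$.

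The approach is to absorb those bad points by an auxiliary function. For each $\epsilon > 0$, the plan is to construct a nonnegative superharmonic function $\phi_\epsilon$ on $\Omega$ with $\liminf_{\Omega \ni y \to x} \phi_\epsilon(y) = \infty$ for every $x \in E$ and, in the \p-parabolic case, also for $x = \infty$, while $\phi_\epsilon \to 0$ pointwise on $\Omega$ as $\epsilon \to 0$. Granted such $\phi_\epsilon$, the function $v + \phi_\epsilon + \epsilon$ is superharmonic and bounded from below, with $\liminf \geq f$ at every point of $\bdy\Omega$, hence lies in $\U_f(\Omega)$. Thus $v + \phi_\epsilon + \epsilon \geq \uHp f = \Hp f$ on $\Omega$, and letting $\epsilon \to 0$ yields $v \geq \Hp f$; the reverse inequality follows by the symmetric argument applied to $-v$ and $-f$. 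The hard part is the construction of $\phi_\epsilon$: the blowup at the capacity-zero set $E$ is a standard consequence of the outer-capacity property (cf.~\cite{BBbook}, via lsc-regularized sums $\sum 2^{-k} w_k$ where $w_k \in \Np(X)$ detect $E$ with small norm), but producing the blowup at $\infty$ when $\Omega$ is unbounded relies essentially on Definition~\ref{def:p-para} — this is exactly where \p-hyperbolic $\Omega$ fail and where the restriction in the theorem is forced upon us.
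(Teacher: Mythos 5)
Your existence argument is exactly the paper's: take $u=\uHp f$, squeeze it between the constants $\inf f$ and $\sup f$ to get boundedness, invoke the \p-harmonic/$\pm\infty$ dichotomy, and let the Kellogg property (Theorem~\ref{thm:kellogg}) produce $E$. That half is correct.

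The uniqueness half has a genuine gap at its central step: you assert that $v+\phi_\epsilon+\epsilon$ is superharmonic because $v$ is \p-harmonic and $\phi_\epsilon$ is superharmonic. For $p\neq 2$ the \p-Laplacian is not linear, and the sum of a \p-harmonic function and a superharmonic function is in general \emph{not} superharmonic; the superharmonic class is closed under addition of constants, positive scalar multiples, minima and increasing limits, but not under sums. So $v+\phi_\epsilon+\epsilon\in\U_f(\Omega)$ does not follow, and this is precisely the obstruction that makes nonlinear resolutivity and uniqueness proofs delicate: the classical ``add a small potential blowing up on the polar set'' device from the linear theory is unavailable. The standard nonlinear substitute (as in \cite{BBS2} and \cite{hansevi2}) replaces the additive perturbation by lsc-regularized solutions of obstacle problems with perturbed obstacles and a convergence theorem for those solutions as the perturbation decreases to zero. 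The paper sidesteps all of this by quoting exactly such a packaged result: it first notes that $\bCp(E,\Omega)\leq\Cp(E)$ (Lemma~5.2 in \cite{BBSdir}, valid also for unbounded $\Omega$) and then invokes Corollary~7.9 in \cite{hansevi2}, which is essentially the uniqueness statement you are trying to reprove from scratch. A secondary issue: even granting your strategy, the function $\sum_k 2^{-k}w_k$ with $w_k\in\Np(X)$ is merely a Newtonian function detecting $E$, not automatically superharmonic, so producing $\phi_\epsilon$ itself already requires passing to an obstacle-problem solution; and the blowup at $\infty$ in the \p-parabolic case is likewise only asserted, not constructed.
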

Existence holds also for \p-hyperbolic sets, 
which follows from the proof below, 
but uniqueness can fail. 
To see this, 
let $1<p<n$ and 
$\Omega=\R^n\setm\itoverline{B(0,1)}$ 
in unweighted $\R^n$. 
Then both 
$u(x)=|x|^{(p-n)/(p-1)}$ and $v\equiv 1$ are 
functions that are \p-harmonic in $\Omega$ 
and 
satisfy \eqref{uniq-eq} 
when $f \equiv 1$, 
with $E=\emptyset$.
\begin{proof}
Let $u=\uHp f$ 
and let $E$ be the set of irregular boundary points in $\bdyOmegaX$. 
Then $\Cp(E)=0$ 
by the Kellogg property (Theorem~\ref{thm:kellogg}), 
and $u$ is bounded, \p-harmonic, and satisfies \eqref{uniq-eq}, 
which shows the existence. 

For uniqueness, 
suppose that $\Omega$ is bounded or \p-parabolic, 
and that $u$ is a bounded \p-harmonic function 
that satisfies \eqref{uniq-eq}. 
By Lemma~5.2 in Bj\"orn--Bj\"orn--Shanmugalingam~\cite{BBSdir}, 
$\bCp(E,\Omega)\leq\Cp(E)$ 
(the proof is valid also if $\Omega$ is unbounded), 
and hence Corollary~7.9 in Hansevi~\cite{hansevi2} implies that 
$u=\Hp f$.
\end{proof}
Another consequence of the barrier characterization 
is the following restriction result.
\begin{proposition} \label{prop-subset}
Let $x_0\in\bdyOmegaX$ 
be regular\textup{,} and let $V\subset\Omega$ be open and 
such that $x_0\in\bdy V$.
Then $x_0$ is regular also with respect to $V$.
\end{proposition}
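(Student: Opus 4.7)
The plan is to reduce the statement to Theorem~\ref{thm:barrier} by transporting a barrier from $\Omega$ down to $V$. Since $x_0$ is regular with respect to $\Omega$, part \ref{barrier-bar-cont-Om} of Theorem~\ref{thm:barrier} produces a positive continuous barrier $u$ at $x_0$ with respect to $\Omega$. The claim will be that $u|_V$ is itself a barrier with respect to $V$ at $x_0$, and the conclusion then follows from the implication \ref{barrier-bar-Om} $\Rightarrow$ \ref{barrier-reg-Om} of Theorem~\ref{thm:barrier} applied to $V$. Note that $V$ satisfies the standing hypotheses, since $\Cp(X\setm V)\ge\Cp(X\setm\Omega)>0$.

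The verification of the barrier conditions for $u|_V$ is short. Superharmonicity in $V$ follows from Corollary~9.10 in \cite{BBbook} because $V\subset\Omega$ is open. Condition~\ref{barrier-ii} of Definition~\ref{def:barrier} at $x_0$ is immediate from the inclusion $V\subset\Omega$, so every approach inside $V$ is an approach inside $\Omega$ and $\lim_{V\ni y\to x_0}u(y)=0$.

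The only point needing case analysis is condition~\ref{barrier-iii} at an arbitrary $x\in\bdy V\setm\{x_0\}$. First, if $x\in\bdy\Omega$ (finite or $\infty$), then every sequence in $V$ converging to $x$ also lies in $\Omega$, so
\[
\liminf_{V\ni y\to x}u(y)\ge\liminf_{\Omega\ni y\to x}u(y)>0
\]
by the barrier property of $u$ on $\Omega$. Second, if $x$ is a finite point of $\bdy V$ not in $\bdy\Omega$, then $x\in\Omega$, so continuity and positivity of $u$ in $\Omega$ give $\liminf_{V\ni y\to x}u(y)\ge u(x)>0$. The case $x=\infty$ with $V$ unbounded forces $\Omega$ unbounded as well, so $\infty\in\bdy\Omega\setm\{x_0\}$ and the first case applies.

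There is no real obstacle here: the only thing to be careful about is the possibility that a point of $\bdy V$ sits in the interior of $\Omega$, where the barrier $u$ from Theorem~\ref{thm:barrier} does not vanish — and this is precisely where the \emph{positivity} clause in \ref{barrier-bar-cont-Om} (rather than a merely nonnegative barrier) is used. Once condition~\ref{barrier-iii} is checked in all three cases, invoking Theorem~\ref{thm:barrier} on $V$ completes the proof.
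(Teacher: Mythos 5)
Your proof is correct and follows essentially the same route as the paper, which explicitly reduces Proposition~\ref{prop-subset} to the restriction argument used for the implication \ref{barrier-bar-cont-Om} $\Rightarrow$ \ref{barrier-bar-B} in Theorem~\ref{thm:barrier}: restrict a positive continuous barrier from $\Omega$ to $V$ and check the three barrier conditions, using positivity and lower semicontinuity precisely at the points of $\bdy V$ lying inside $\Omega$. The details you supply (including the case $x=\infty$) are exactly those the paper leaves to the reader.
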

\begin{proof}
Using the barrier characterization the
proof of this fact is almost
identical to the proof of the implication
\ref{barrier-bar-cont-Om} $\imp$ \ref{barrier-bar-B}
in Theorem~\ref{thm:barrier}.
We leave the details to the reader.
\end{proof}

\section{Boundary regularity for obstacle problems}
\label{sec:bdy-reg-obst}
\begin{theorem}\label{thm:reg-obst-prob}
Let $\psi\colon\Omega\to\eR$ and $f\in\Dp(\Omega)$ 
be functions such that $\K_{\psi,f}\neq\emptyset$\textup{,} 
and let $u$ be the lsc-regularized solution 
of the $\K_{\psi,f}$-obstacle problem. 
If $x_0\in\bdyOmegaX$ is regular\textup{,} 
then 
\begin{equation}\label{reg-obst-prob-eq}
	m
	= \liminf_{\Omega\ni y\to x_0}u(y)
	\leq \limsup_{\Omega\ni y\to x_0}u(y)
	= M,
\end{equation}
where
\begin{align*}
	m
	&:= \sup\{k\in\R:(f-k)_\limminus\in\Dp_0(\Omega;B)
		\text{ for some ball }B\ni x_0\}, \\
	M
	&:= \max\Bigl\{M',\cplimsup_{\Omega\ni y\to x_0}\psi(y)\Bigr\}, \\
	M'
	&:= \inf\{k\in\R:(f-k)_\limplus\in\Dp_0(\Omega;B)
		\text{ for some ball }B\ni x_0\}.
\end{align*}
\end{theorem}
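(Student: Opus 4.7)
The plan is to reduce to the known bounded version of this theorem from Bj\"orn--Bj\"orn~\cite{BB} by restricting to a bounded set $V:=\Omega\cap B$ for a sufficiently small ball $B=B(x_0,r)$, and then to transfer the Sobolev-type boundary-trace bounds from the data $(f,\psi)$ on $\Omega$ to the corresponding data $(u|_V,\psi|_V)$ on $V$. Once $B$ is fixed so that $V$ is bounded, the restriction $u|_V$ is the lsc-regularized solution of the $\K_{\psi|_V,u|_V}(V)$-obstacle problem (via the restriction principle, Proposition~3.7 in Hansevi~\cite{hansevi1}, invoked already in the proof of Theorem~\ref{thm:barrier}), and $x_0$ is regular with respect to $V$ by Proposition~\ref{prop-subset}. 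The bounded version of the theorem then gives
\[
	m_V
	\leq \liminf_{\Omega\ni y\to x_0}u(y)
	\leq \limsup_{\Omega\ni y\to x_0}u(y)
	\leq M_V,
\]
where $m_V$ and $M_V$ are the analogues of $m$ and $M$ defined using $u|_V$ as boundary data and $\psi|_V$ as obstacle on $V$.

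The next task is to show $M_V\leq M$ and $m_V\geq m$. For the upper inequality, fix $k>M$. Then $k>M'$ provides a ball $B'\ni x_0$ with $(f-k)_\limplus\in\Dp_0(\Omega;B')$, while $k>\cplimsup_{\Omega\ni y\to x_0}\psi(y)$ provides a ball $B''\ni x_0$ with $\psi\leq k$ q.e.\ in $B''\cap\Omega$. Shrinking $B$ to lie inside $B'\cap B''$, the pointwise estimate
\[
	(u-k)_\limplus
	\leq (f-k)_\limplus+(u-f)_\limplus,
\]
combined with the fact that $u-f\in\Dp_0(\Omega)$ (so that $(u-f)_\limplus$ extends by zero across $\bdy\Omega$ to give an element of $\Dp(B)$) and the lattice/truncation properties of Newtonian and Dirichlet spaces, shows that $(u-k)_\limplus|_V$ extends by zero across $B\setm\Omega$ to an element of $\Dp_0(V;B)$. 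Hence $M'_V\leq k$, and since $\psi|_V\leq k$ q.e., also $M_V\leq k$. Letting $k\downarrow M$ gives $M_V\leq M$. The bound $m_V\geq m$ is obtained symmetrically via $(u-k)_\limminus\leq (f-k)_\limminus+(u-f)_\limminus$ for $k<m$.

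It remains to prove the reverse inequalities $\liminf u(y)\leq m$ and $\limsup u(y)\geq M$. Suppose $L:=\liminf_{\Omega\ni y\to x_0}u(y)>m$ and pick $m<k<L$. By the definition of liminf, there is $r>0$ with $u\geq k$ on $V:=\Omega\cap B(x_0,r)$, so $(u-k)_\limminus\equiv 0$ in $V$. The pointwise estimate $(f-k)_\limminus\leq (u-k)_\limminus+(u-f)_\limplus$ together with $u-f\in\Dp_0(\Omega)$ then forces $(f-k)_\limminus\in\Dp_0(\Omega;B(x_0,r))$, contradicting $k>m$. The bound $\limsup u\geq M$ is analogous: the $M'$-part is handled symmetrically, and the obstacle contribution $\cplimsup\psi$ is handled by the q.e.\ inequality $u\geq\psi$, which propagates to the $\Cp$-essential limsup.

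The main obstacle is the comparison in the second paragraph: transferring the $\Dp_0$-trace condition from $f$ to $u$ via $u-f\in\Dp_0(\Omega)$. The pointwise dominations are straightforward, but verifying that the truncated functions $(u-k)_\limplus$ and $(u-k)_\limminus$, extended by zero across $\bdy\Omega\cap B$, actually lie in $\Dp(B)$ requires careful use of the lattice structure of Newtonian spaces and control of minimal upper gradients near $\bdy\Omega$.
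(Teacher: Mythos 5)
Your overall architecture (localize to $V=\Omega\cap B$, transfer the $\Dp_0$-trace conditions between $f$ and $u$ via $u-f\in\Dp_0(\Omega)$ and the lattice property, and prove the reverse inequalities $\liminf u\leq m$, $\limsup u\geq M'$, $\limsup u\geq\cplimsup\psi$ directly) is close in spirit to the paper, and your third paragraph in particular coincides with the paper's argument almost verbatim. The genuine gap is in the reduction step: you invoke ``the bounded version of the theorem'' from Bj\"orn--Bj\"orn~\cite{BB} with boundary data $u|_V$, but that theorem requires the boundary data to lie in $\Np(V)$, whereas $u|_V$ is only known to lie in $\Dp(V)$ (indeed $u\in\K_{\psi,f}$ with $f\in\Dp(\Omega)$, and nothing forces $u\in L^p$ near $x_0$). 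The paper explicitly states that one of the two improvements of Theorem~\ref{thm:reg-obst-prob} over Theorem~5.6 in \cite{BB} is precisely the relaxation from $\Np$ to $\Dp$ data, so your reduction quietly assumes a strengthened bounded case that is itself part of what has to be proved; as written the argument is circular at this point.

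Concretely, what is missing is the step showing that $u$ is bounded from above near $x_0$. The paper achieves this by truncating from below: for $k>M$ it sets $v=\max\{u,k\}$ in $V$ and $v=k$ in $B\setm V$, observes that $(u-k)_\limplus\in\Np_0(\Omega;B)$ (the same lattice argument you use) so that $v\in\Np(B)$, and then applies the boundary weak Harnack inequality (Lemma~5.5 in \cite{BB}) to $\oHp_V v$ together with the comparison $u\leq\oHp_V v$ from the obstacle-problem ordering. Only after this boundedness is secured can one run the Perron-solution/regularity machinery (Theorems~\ref{thm-hansevi2-main-res}, \ref{thm:barrier}, and~\ref{thm:reg}) on the bounded comparison function $v$ to conclude $\limsup_{\Omega\ni y\to x_0}u\leq k$. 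Your proposal never addresses the possible unboundedness of $u$ near $x_0$, and without the truncation-plus-weak-Harnack step neither your appeal to the bounded case nor any direct appeal to regularity (which is formulated for bounded boundary data) goes through.
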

Roughly speaking, 
$m$ is the $\liminf$ of $f$ at $x_0$ in the Sobolev sense and 
$M'$ is the corresponding $\limsup$. 

Observe that it is not possible to replace $M$ by $M'$, 
since it can happen that $\cplimsup_{\Omega\ni y\to x_0}\psi(y)>M'$, 
see Example~5.7 in Bj\"orn--Bj\"orn~\cite{BB} 
(or Example~11.10 in \cite{BBbook}). 

In the case when $\Omega$ is bounded, 
this improves upon 
Theorem~5.6 in \cite{BB} (and Theorem~11.6 in \cite{BBbook}) 
in two ways: 
By allowing for $f\in\Dp(\Omega)$ 
and by having (two) equalities in \eqref{reg-obst-prob-eq}, 
instead of inequalities.
\begin{lemma}\label{lem:Dp0}
Assume that\/ $0<\tau<1$. 
If $h\in\Dp_0(\Omega;B)$ for some ball $B$\textup{,} 
then $h\in\Np_0(\Omega;\tau B)$. 
\end{lemma}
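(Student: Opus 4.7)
The plan is to exhibit a single function that simultaneously lies in $\Np(\tau B)$ and vanishes on $\tau B\setm\Omega$, which by Definition~\ref{def:Dp0} will place $h$ in $\Np_0(\Omega;\tau B)$. By hypothesis, $h$ is the restriction to $\Omega\cap B$ of some $\bar h\in\Dp(B)$ with $\bar h=0$ on $B\setm\Omega$, and I will show that this same $\bar h$, restricted to $\tau B$, is the required witness.

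I would first note that $X$ is proper, since it is complete and $\mu$ is doubling; in particular closed balls are compact. Because $0<\tau<1$, the closure $\itoverline{\tau B}$ is a compact subset of $B$, so $\tau B$ is an open set with compact closure in $B$.

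The main input is then the identity $\Nploc(B)=\Dploc(B)$, which under the standing assumptions is Proposition~4.14 in~\cite{BBbook} (as recorded in Section~\ref{sec:p-harmonic}). Since $\bar h\in\Dp(B)\subset\Dploc(B)=\Nploc(B)$, the compact containment $\itoverline{\tau B}\subset B$ immediately yields $\bar h|_{\tau B}\in\Np(\tau B)$. Combined with $\bar h=0$ on $B\setm\Omega\supset\tau B\setm\Omega$, this shows that $\bar h|_{\tau B}$ is the required extension and hence $h\in\Np_0(\Omega;\tau B)$.

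The actual content of the lemma is upgrading local upper gradient control to local $L^p$ control of the function itself: membership of $\bar h$ in $\Dp(B)$ provides a $p$-weak upper gradient in $L^p(B)$, but does not a priori give any bound on $\bar h$. This upgrade is exactly what $\Dploc=\Nploc$ delivers, and it is the only step drawing on the doubling condition and the $p$-Poincar\'e inequality (via a standard cutoff plus Sobolev--Poincar\'e argument hiding inside that proposition). Everything else is simply unpacking the definitions of $\Dp_0$ and $\Np_0$ and observing $\tau B\setm\Omega\subset B\setm\Omega$.
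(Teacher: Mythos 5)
Your proof is correct and follows essentially the same route as the paper's: extend $h$ by zero to an element of $\Dp(B)$, invoke $\Dploc(B)=\Nploc(B)$ (Proposition~4.14 in \cite{BBbook}) together with the compact containment $\itoverline{\tau B}\subset B$ to conclude $h\in\Np(\tau B)$, and finish using $\tau B\setm\Omega\subset B\setm\Omega$. The only difference is that you make explicit the properness/compactness step that the paper leaves implicit.
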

\begin{proof}
Let $h\in\Dp_0(\Omega;B)$ for some ball $B$. 
Extend $h$ to $B$ by letting $h$ be equal to zero in $B\setm\Omega$ 
so that $h\in\Dp(B)$. 
Theorem~4.14 in \cite{BBbook} 
implies that $h\in\Nploc(B)$, 
and hence $h\in\Np(\tau B)$. 
As $h=0$ in $\tau B\setm\Omega$, 
it follows that $h\in\Np_0(\Omega;\tau B)$. 
\end{proof}
It follows from Lemma~\ref{lem:Dp0} that the space $\Dp_0(\Omega;B)$ 
in the expressions for $m$ and $M'$ in Theorem~\ref{thm:reg-obst-prob} 
can in fact be replaced by the space $\Np_0(\Omega;B)$ 
without changing the values of $m$ and $M'$.
\begin{proof}[Proof of Theorem~\ref{thm:reg-obst-prob}]
Let $k>M$ 
be real and, using Lemma~\ref{lem:Dp0}, 
find a ball $B=B(x_0,r)$, 
with $r<\frac{1}{4}\diam X$, 
so that $(f-k)_\limplus\in\Np_0(\Omega;B)$ 
and $k\geq\cpesssup_{B\cap\Omega}\psi$. 
Let $V=B\cap\Omega$ and let 
\[
	v
	= \begin{cases}
		\max\{u,k\} & \text{in }V, \\
		k & \text{in }B\setm V.
	\end{cases}
\]

Since $0\leq(u-k)_\limplus\leq (u-f)_\limplus+(f-k)_\limplus\in\Np_0(\Omega;B)$, 
Lemma~5.3 in Bj\"orn--Bj\"orn~\cite{BB} 
(or Lemma~2.37 in \cite{BBbook}) shows that 
$(u-k)_\limplus\in\Np_0(\Omega;B)$. 
Because 
$\max\{u,k\}=k+(u-k)_\limplus$, 
we see that 
$(v-k)_\limplus\in\Np_0(V;B)$ and 
$v\in\Np(B)$. 
Let $U=\Omega\cap\frac{1}{3}B$. 
The boundary weak Harnack inequality 
(Lemma~5.5 in \cite{BB} or 
Lemma~11.4 in \cite{BBbook}) 
implies that $\oHp_{V}v$ is bounded from above on 
$\overline{U}$. 

By Lemma~4.7 in Hansevi~\cite{hansevi1}, 
it follows that 
\[
	\oHp_{V}v
	\geq \oHp_{V}k
	= k
	\geq \cpesssup_V\psi
	\quad\text{in }V,
\]
and hence 
$\oHp_{V}v$ 
is a solution of the 
$\K_{\psi,v}(V)$-obstacle problem. 
Furthermore, Proposition~3.7 in \cite{hansevi1} shows that 
$u$ is a solution of the $\K_{\psi,u}(V)$-obstacle problem, 
and thus 
$u\leq\oHp_{V}v$ 
in $V$, 
by Lemma~4.2 in \cite{hansevi1}. 
Hence $u$ 
is bounded from above on $\overline{U}$,  and thus $v$ 
is bounded on $\overline{U}$. 

By replacing $V$ by $U$ in the previous paragraph, 
we see that 
$u\leq\oHp_{U}v$ 
in $U$. 
It follows from Theorem~\ref{thm-hansevi2-main-res} 
(after multiplication by a suitable cutoff function)
that $\oHp_{U}v=\uHp_{U}v$. 
Theorem~\ref{thm:barrier} asserts that 
$x_0$ is regular also with respect to $U$. 
Hence, as $v\equiv k$ on $\tfrac{1}{3}B\cap\bdy U$, 
Theorem~\ref{thm:reg} shows that 
\[
	\limsup_{\Omega\ni y\to x_0}u(y)
	= \limsup_{U\ni y\to x_0}u(y)
	\leq \lim_{U\ni y\to x_0}\uHp_{U}v(y)
	= v(x_0)
	= k.
\]
Taking infimum over all $k>M$ shows that 
\begin{equation}\label{reg-obst-prob-eq-M}
	\limsup_{\Omega\ni y\to x_0}u(y)
	\leq M.
\end{equation}

Now let $k>\limsup_{\Omega\ni y\to x_0}u(y)$ be real. 
Then there is a ball $B\ni x_0$ 
such that $u\leq k$ in $B\cap\Omega$, 
and hence $(u-k)_\limplus\equiv 0$ in $B\cap\Omega$. 
It follows that 
\[
	0
	\leq (f-k)_\limplus
	\leq (f-u)_\limplus+(u-k)_\limplus\in\Dp_0(\Omega;B),
\]
and thus $(f-k)_\limplus\in\Dp_0(\Omega;B)$, 
by Lemma~2.8 in Hansevi~\cite{hansevi1}. 
This implies that $k\geq M'$, 
and hence taking infimum over all $k>\limsup_{\Omega\ni y\to x_0}u(y)$ 
shows that  
\begin{equation}\label{reg-obst-prob-eq-M'}
	\limsup_{\Omega\ni y\to x_0}u(y)
	\geq M'.
\end{equation}
We also know that $u\geq\psi$ q.e., 
so that 
\[
	\limsup_{\Omega\ni y\to x_0}u(y)
	\geq \cplimsup_{\Omega\ni y\to x_0}u(y)
	\geq \cplimsup_{\Omega\ni y\to x_0}\psi(y),
\]
which combined with \eqref{reg-obst-prob-eq-M} and 
\eqref{reg-obst-prob-eq-M'} shows that 
\[
	\limsup_{\Omega\ni y\to x_0}u(y)
	= M,
\]
and thus we have shown the last equality in 
\eqref{reg-obst-prob-eq}. 

To prove the other equality, 
let $k<\liminf_{\Omega\ni y\to x_0}u(y)$. 
Then there is a ball $B\ni x_0$ 
such that $k\leq u$ in $B\cap\Omega$, 
and hence $(k-u)_\limplus\equiv 0$ in $B\cap\Omega$. 
Lemma~2.8 in Hansevi~\cite{hansevi1} implies that 
$(f-k)_\limminus\in\Dp_0(\Omega;B)$, since 
\[
	0
	\leq (k-f)_\limplus
	\leq (k-u)_\limplus+(u-f)_\limplus\in\Dp_0(\Omega;B).
\]
Thus $k\leq m$, 
and hence taking supremum over all $k<\liminf_{\Omega\ni y\to x_0}u(y)$ 
shows that 
\[
	\liminf_{\Omega\ni y\to x_0}u(y)
	\leq m.
\]

We complete the proof by applying the first part of the proof to 
$h:=-f$ and $\psi\equiv-\infty$. 
Note that $\oHp h$ is the lsc-regularized solution of the 
$\K_{-\infty,-f}$-obstacle problem, 
and that $u\geq\oHp f=-\oHp h$, 
by Lemma~4.2 in Hansevi~\cite{hansevi1}. 
Let 
\[
	M''
	= \inf\{k\in\R:(h-k)_\limplus\in\Dp_0(\Omega;B)
		\text{ for some ball }B\ni x_0\}.
\] 
Then, as 
\begin{align*}
	\max\{M'', -\infty\} 
	&= \inf\{k\in\R:(f+k)_\limminus\in\Dp_0(\Omega;B) 
		\text{ for some ball }B\ni x_0\} \\
	&= -\sup\{k\in\R:(f-k)_\limminus\in\Dp_0(\Omega;B) 
		\text{ for some ball }B\ni x_0\} \\
	&= -m, 
\end{align*}
it follows that 
\[
	\liminf_{\Omega\ni y\to x_0}u(y)
	= -\limsup_{\Omega\ni y\to x_0}(-u)(y)
	\geq -\limsup_{\Omega\ni y\to x_0}\oHp h(y)
	= m.
	\qedhere
\]
\end{proof}
\begin{theorem}\label{thm:reg-obst-cont}
Let $\psi\colon\Omega\to\eR$ and $f\in\Dp(\Omega)$ 
be functions such that $\K_{\psi,f}\neq\emptyset$\textup{,} 
and let $u$ be the lsc-regularized solution of the 
$\K_{\psi,f}$-obstacle problem. 
Assume that $x_0\in\bdyOmegaX$ is regular and that either 
\begin{enumerate}
\item\label{reg-obst-cont-a}
$f(x_0):=\lim_{\Omega\ni y\to x_0}f(y)$ exists\textup{,} or 
\item\label{reg-obst-cont-b}
$f\in\Dp(\overline{\Omega}\cap B)$ for some ball $B\ni x_0$\textup{,} 
and $f|_{\bdy\Omega\cap B}$ 
is continuous at $x_0$. 
\end{enumerate}
Then $\lim_{\Omega\ni y\to x_0}u(y)=f(x_0)$ 
if and only if 
$f(x_0)\geq\cplimsup_{\Omega\ni y\to x_0}\psi(y)$.

In both cases we allow $f(x_0)$ to be $\pm\infty$. 
\end{theorem}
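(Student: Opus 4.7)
The plan is to derive Theorem~\ref{thm:reg-obst-cont} directly from Theorem~\ref{thm:reg-obst-prob}. The necessity of $f(x_0)\geq\cplimsup_{\Omega\ni y\to x_0}\psi(y)$ is immediate: since $u\geq\psi$ q.e.\ in $\Omega$, the identity $\lim_{\Omega\ni y\to x_0}u(y)=f(x_0)$ forces $\psi\leq f(x_0)+\epsilon$ q.e.\ near $x_0$ for every $\epsilon>0$, and letting $\epsilon\to 0$ yields the claim. For the sufficiency, I will reduce the problem to establishing the two inequalities
\[
	m\geq f(x_0)\quad\text{and}\quad M'\leq f(x_0),
\]
with $m$ and $M'$ as in Theorem~\ref{thm:reg-obst-prob}. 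Once they are in place, that theorem together with the hypothesis $\cplimsup\psi\leq f(x_0)$ gives
\[
	f(x_0)\leq m=\liminf_{\Omega\ni y\to x_0}u(y)\leq\limsup_{\Omega\ni y\to x_0}u(y)=\max\{M',\cplimsup\psi\}\leq f(x_0),
\]
pinching both one-sided limits to $f(x_0)$.

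Under hypothesis \ref{reg-obst-cont-a}, both inequalities are essentially formal, at least when $f(x_0)\in\R$. Given $k>f(x_0)$, the assumed interior limit furnishes a ball $B\ni x_0$ with $f<k$ everywhere on $\Omega\cap B$, so $(f-k)_\limplus\equiv 0$ on $\Omega\cap B$ and thus trivially belongs to $\Dp_0(\Omega;B)$; letting $k\searrow f(x_0)$ gives $M'\leq f(x_0)$. The argument with $(f-k)_\limminus$ for $k<f(x_0)$ yields $m\geq f(x_0)$ symmetrically. The cases $f(x_0)=\pm\infty$ follow by letting $k$ be arbitrarily large or small: the relevant one-sided inequality is vacuous, while the other is obtained exactly as above, and the conclusion still pins $\lim u$ down to the correct infinite value.

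The technical heart of the proof lies in hypothesis \ref{reg-obst-cont-b}, where $f$ is only assumed continuous along $\bdy\Omega\cap B$ at $x_0$, so $(f-k)_\limplus$ need not vanish throughout $\Omega\cap B'$. For real $k>f(x_0)$, the boundary continuity still yields a smaller ball $B'\subset B$ on which $(f-k)_\limplus\equiv 0$ on $\bdy\Omega\cap B'$, and the hypothesis $f\in\Dp(\overline{\Omega}\cap B)$ gives $(f-k)_\limplus\in\Dp(\overline{\Omega}\cap B')$. The main step is a gluing argument showing that extending $(f-k)_\limplus$ by $0$ across $\bdy\Omega\cap B'$ produces a function $\tilde{h}\in\Dp(B')$, whence $(f-k)_\limplus\in\Dp_0(\Omega;B')$ and $M'\leq k$. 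For this, I would take the zero-extension of $g_{(f-k)_\limplus}$ as the candidate \p-weak upper gradient and verify the upper gradient inequality curve by curve: any rectifiable curve in $B'$ splits into finitely many subcurves lying in $\overline{\Omega}$ (handled by the upper gradient of $(f-k)_\limplus$ on $\overline{\Omega}\cap B'$) or in $B'\setm\overline{\Omega}$ (where $\tilde{h}\equiv 0$), the values matching at the interfaces by the pointwise vanishing on $\bdy\Omega\cap B'$. The argument for $(f-k)_\limminus$ and $m\geq f(x_0)$ is symmetric. I expect this gluing step to be the most delicate part, though it closely parallels the pasting constructions already used in the paper.
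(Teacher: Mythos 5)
Your proposal is correct and follows essentially the same route as the paper: both directions reduce to Theorem~\ref{thm:reg-obst-prob} by establishing $M'\le f(x_0)\le m$ (trivially in case \ref{reg-obst-cont-a}, via the boundary continuity of $f$ in case \ref{reg-obst-cont-b}) and by using $u\ge\psi$ q.e.\ for the necessity. The only point where you go beyond the paper is in spelling out the zero-extension argument behind $(f-k)_\limplus\in\Dp_0(\Omega;B')$ in case \ref{reg-obst-cont-b}, which the paper asserts without comment; your gluing strategy is sound, though a curve need not split into \emph{finitely} many subcurves --- it suffices to compare the two endpoint values, cutting the curve at its first exit from and last return to $\overline{\Omega}$, where the extended function vanishes.
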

Note that it is possible to have 
$f(x_0)<\cplimsup_{\Omega\ni y\to x_0}\psi(y)$ 
and still have a solvable obstacle problem, 
see 
Example~5.7 in Bj\"orn--Bj\"orn~\cite{BB} 
(or Example~11.10 in \cite{BBbook}). 

The proof of Theorem~\ref{thm:reg-obst-cont} 
is similar to 
the proof of 
Theorem~5.1 in Bj\"orn--Bj\"orn~\cite{BB} 
(or Theorem~11.8 in \cite{BBbook}), 
but appealing to Theorem~\ref{thm:reg-obst-prob} above 
instead of 
Theorem~5.6 in \cite{BB} (or Theorem~11.6 in \cite{BBbook}). 
That one can allow for $f(x_0)=\pm\infty$ seems 
not to have been noticed before. 
\begin{proof}
Let $m$, $M$, and $M'$ be defined as in Theorem~\ref{thm:reg-obst-prob}. 
We first show that $M'\leq f(x_0)$. 
If $f(x_0)=\infty$ there is nothing to prove, so assume that 
$f(x_0)\in[-\infty,\infty)$ and let $\alpha>f(x_0)$ be real. 
Also let $B'=B(x_0,r)$ be chosen so that 
\[
	f(x)
	< \alpha
	\quad \text{for }
	\begin{cases}
		x\in B'\cap\Omega & 
			\text{in case \ref{reg-obst-cont-a}}, \\
		x\in B'\cap\bdy\Omega & 
			\text{in case \ref{reg-obst-cont-b}},
	\end{cases}
\]
with the additional requirement that
$B'\subset B$ in case \ref{reg-obst-cont-b}. 
Then $(f-\alpha)_\limplus\in\Dp_0(\Omega;B')$ and 
thus $M'\leq\alpha$. 
Letting $\alpha\to f(x_0)$ shows that $M'\leq f(x_0)$. 
Applying this to $-f$ yields $f(x_0)\leq m$. 

If $f(x_0)\geq\cplimsup_{\Omega\ni y\to x_0}\psi(y)$, 
then by Theorem~\ref{thm:reg-obst-prob}, 
\[
	f(x_0) 
	\leq  m
	= \liminf_{\Omega\ni y\to x_0}u(y)
	\leq \limsup_{\Omega\ni y\to x_0}u(y)
	= M
	\leq f(x_0),
\]
and hence 
$\lim_{\Omega\ni y\to x_0}u(y)=f(x_0)$. 

Conversely, if 
$f(x_0)<\cplimsup_{\Omega\ni y\to x_0}\psi(y)$, 
then, as $u\geq\psi$ q.e., 
we have 
\[
	f(x_0)
	< \cplimsup_{\Omega\ni y\to x_0}\psi(y)
	\leq \cplimsup_{\Omega\ni y\to x_0}u(y)
	\leq \limsup_{\Omega\ni y\to x_0}u(y).
	\qedhere
\]
\end{proof}
The following corollary is a special case of 
Theorem~\ref{thm:reg-obst-cont}. 
(For the existence of a continuous solution 
see Section~\ref{sec:p-harmonic}.)
\begin{corollary}\label{cor:obst-pt}
Let $f\in\Dp(\Omega)\cap C(\overline{\Omega})$ and let 
$u$ be the continuous solution of the 
$\K_{f,f}$-obstacle problem. 
If $x_0\in\bdyOmegaX$ is regular\textup{,} 
then $\lim_{\Omega\ni y\to x_0}u(y)=f(x_0)$.
\end{corollary}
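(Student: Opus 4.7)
The plan is to deduce this as a direct consequence of Theorem~\ref{thm:reg-obst-cont}\,\ref{reg-obst-cont-a} applied with obstacle $\psi:=f$, so the main task is simply to verify each hypothesis in that theorem.

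First, I would observe that $\K_{f,f}$ is nonempty since $f\in\K_{f,f}$, and that because $\psi=f$ is continuous as an extended real-valued function on $\Omega$, the lsc-regularized solution of the $\K_{f,f}$-obstacle problem is continuous by the result of Hansevi cited after Definition~\ref{def:ext}. Hence the continuous solution $u$ appearing in the statement coincides with the lsc-regularized solution to which Theorem~\ref{thm:reg-obst-cont} applies. The regularity of $x_0$ is given by assumption, and since $f\in C(\overline{\Omega})$ is real-valued, the limit
\[
\lim_{\Omega\ni y\to x_0}f(y)=f(x_0)
\]
exists as a real number, so hypothesis \ref{reg-obst-cont-a} of Theorem~\ref{thm:reg-obst-cont} is satisfied.

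It then remains to verify the sign condition $f(x_0)\geq\cplimsup_{\Omega\ni y\to x_0}\psi(y)$. Since $\psi=f$ and $f$ is continuous at $x_0$, one has
\[
\cplimsup_{\Omega\ni y\to x_0}\psi(y)
\leq \limsup_{\Omega\ni y\to x_0}f(y)
= f(x_0),
\]
so the condition holds. Theorem~\ref{thm:reg-obst-cont} then gives $\lim_{\Omega\ni y\to x_0}u(y)=f(x_0)$, which is the claim.

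There is no real obstacle in this argument, since the work has already been done in Theorem~\ref{thm:reg-obst-cont}; the only point to be careful about is the identification of the continuous solution with the lsc-regularized one, which is guaranteed by the continuity of $\psi=f$.
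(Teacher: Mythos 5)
Your proposal is correct and follows exactly the paper's intended route: the paper states the corollary as a special case of Theorem~\ref{thm:reg-obst-cont} (with $\psi=f$, case \ref{reg-obst-cont-a}), and your verification of the hypotheses --- nonemptiness of $\K_{f,f}$, the identification of the continuous solution with the lsc-regularized one via the continuity of the obstacle, and the inequality $\cplimsup_{\Omega\ni y\to x_0}f(y)\leq\limsup_{\Omega\ni y\to x_0}f(y)=f(x_0)$ --- is precisely the routine checking the paper leaves implicit.
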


\section{Additional regularity characterizations}
\label{sec:bdy-reg-further}
\begin{theorem}\label{thm:reg-2}
Let $x_0\in\bdyOmegaX$ and let $B$ be a ball such that $x_0\in B$. 
Then the following are equivalent\/\textup{:}
\begin{enumerate}
\item\label{reg-2-reg} 
The point $x_0$ is regular.
\item\label{reg-2-obst-var}
For all $f\in\Dp(\Omega)$ and all $\psi\colon\Omega\to\eR$ 
such that $\K_{\psi,f}\neq\emptyset$ and  
\[
	f(x_0)
	:= \lim_{\Omega\ni y\to x_0}f(y)
	\geq \cplimsup_{\Omega\ni y\to x_0}\psi(y)
\]
\textup{(}where the limit in the middle is assumed to exist
in $\eR$\/\textup{)}\textup{,}
the lsc-regularized solution $u$ of the 
$\K_{\psi,f}$-obstacle problem satisfies 
\[
	\lim_{\Omega\ni y\to x_0}u(y)
	= f(x_0).
\]
\item\label{reg-2-obst-from-3}
For all $f\in\Dp(\Omega\cup(B\cap\overline{\Omega}))$ and all 
$\psi\colon\Omega\to\eR$ 
such that 
$\K_{\psi,f}\neq\emptyset$\textup{,} 
$f|_{\bdy\Omega\cap B}$ is continuous at 
$x_0$ \textup{(}with $f(x_0)\in\eR$\textup{),} and 
\[
	f(x_0)
	\geq \cplimsup_{\Omega\ni y\to x_0}\psi(y),
\] 
the lsc-regularized solution $u$ of the 
$\K_{\psi,f}$-obstacle problem satisfies 
\[
	\lim_{\Omega\ni y\to x_0}u(y)
	= f(x_0).
\]
\item\label{reg-2-obst-dist}
The continuous solution $u$ of the 
$\K_{d_{x_0},d_{x_0}}$-obstacle problem\textup{,} 
where $d_{x_0}$ is defined by \eqref{eq-dx0}\textup{,} 
satisfies 
\begin{equation} \label{eq-obst-dx_0-u}
	\lim_{\Omega\ni y\to x_0}u(y)
	= 0.
\end{equation}
Moreover\textup{,} $u$ is a positive continuous barrier at $x_0$.
\end{enumerate}
\end{theorem}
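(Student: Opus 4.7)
The plan is to establish the equivalences via the implications (a) $\Rightarrow$ (b), (a) $\Rightarrow$ (c), (b) $\Rightarrow$ (d), and (d) $\Rightarrow$ (a), with (c) $\Rightarrow$ (d) essentially identical to (b) $\Rightarrow$ (d). The role of (d) is to serve as a bridge between the obstacle characterizations in (b) and (c) and the barrier characterization from Theorem~\ref{thm:barrier}, which is already available.

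First, (a) $\Rightarrow$ (b) and (a) $\Rightarrow$ (c) are direct applications of Theorem~\ref{thm:reg-obst-cont}: the hypothesis in (b) matches case~\ref{reg-obst-cont-a} of that theorem, while the hypothesis in (c) matches case~\ref{reg-obst-cont-b}. No additional argument is needed beyond invoking that result with the given $f$ and $\psi$.

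For (b) $\Rightarrow$ (d), I would specialize the hypothesis of (b) to $f=\psi=d_{x_0}$, with $d_{x_0}$ given by~\eqref{eq-dx0}. This function is continuous on $X^*$ with $d_{x_0}(x_0)=0$, lies in $\Dp(X)\subset\Dp(\Omega)$, and $d_{x_0}\in\K_{d_{x_0},d_{x_0}}$, so the obstacle class is nonempty. The limit $\lim_{\Omega\ni y\to x_0}d_{x_0}(y)=0=\cplimsup_{\Omega\ni y\to x_0}d_{x_0}(y)$ holds by continuity, so (b) applies and yields~\eqref{eq-obst-dx_0-u}. Continuity of the obstacle $d_{x_0}$ as an extended real-valued function forces the lsc-regularized solution $u$ to be continuous (Theorem~4.4 in~\cite{hansevi1}), consistent with the wording of (d). To verify that $u$ is a positive continuous barrier at $x_0$, note that superharmonicity of lsc-regularized solutions of obstacle problems (Section~\ref{sec:p-harmonic}) gives Definition~\ref{def:barrier}\,(i), while (ii) is~\eqref{eq-obst-dx_0-u}. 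For~(iii), the q.e.\ inequality $u\geq d_{x_0}$ in $\Omega$ upgrades to a pointwise inequality on $\Omega$ by lsc-regularization of $u$ together with continuity of $d_{x_0}$, so
\[
\liminf_{\Omega\ni y\to x}u(y)\geq\liminf_{\Omega\ni y\to x}d_{x_0}(y)=d_{x_0}(x)>0\quad\text{for all }x\in\bdy\Omega\setm\{x_0\}.
\]
Positivity of $u$ on $\Omega$ follows from the same pointwise inequality, since $d_{x_0}>0$ on $X\setm\{x_0\}$ and $x_0\notin\Omega$. The argument for (c) $\Rightarrow$ (d) is essentially the same, using that $d_{x_0}$ trivially satisfies the local hypothesis of (c) for any ball $B\ni x_0$.

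The implication (d) $\Rightarrow$ (a) is immediate from Theorem~\ref{thm:barrier}, since (d) exhibits a barrier at $x_0$. The main obstacle is the (b) $\Rightarrow$ (d) step: one must verify that the obstacle solution for $d_{x_0}$ realizes all three defining properties of a barrier, the crucial subpoint being the promotion of the q.e.\ inequality $u\geq d_{x_0}$ to a pointwise one valid at every boundary point other than $x_0$. Once this is in place, the rest reduces to bookkeeping with Theorems~\ref{thm:reg-obst-cont} and~\ref{thm:barrier}.
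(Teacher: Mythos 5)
Your proposal is correct and follows essentially the same route as the paper: (a) $\Rightarrow$ (b),(c) via Theorem~\ref{thm:reg-obst-cont}, then (b),(c) $\Rightarrow$ (d) by specializing to $f=\psi=d_{x_0}$ and checking the three barrier conditions (with superharmonicity of the lsc-regularized solution and the pointwise inequality $u\geq d_{x_0}$ giving conditions (i) and (iii)), and finally (d) $\Rightarrow$ (a) via Theorem~\ref{thm:barrier}. Your explicit justification for upgrading the q.e.\ inequality $u\geq d_{x_0}$ to a pointwise one is a detail the paper leaves implicit, but it is the correct argument.
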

\begin{proof}
\ref{reg-2-reg} $\imp$ \ref{reg-2-obst-var} and 
\ref{reg-2-reg} $\imp$ \ref{reg-2-obst-from-3} 
These implications follow from Theorem~\ref{thm:reg-obst-cont}. 

\smallskip

\ref{reg-2-obst-var} $\imp$ \ref{reg-2-obst-dist} 
and \ref{reg-2-obst-from-3} $\imp$ \ref{reg-2-obst-dist} 
That \eqref{eq-obst-dx_0-u} holds follows directly 
since \ref{reg-2-obst-var} or \ref{reg-2-obst-from-3} holds. 
Moreover, 
as $u\geq d_{x_0}$ everywhere in $\Omega$, 
we see that 
\[
	\liminf_{\Omega\ni y\to x}u(y)
	\geq d_{x_0}(x)
	> 0
	\quad\textup{for all }x\in\bdy\Omega\setm\{x_0\}.
\]
As $u$ is superharmonic (see Section~\ref{sec:p-harmonic}), 
it is a positive continuous barrier at $x_0$.

\smallskip

\ref{reg-2-obst-dist} $\imp$ \ref{reg-2-reg}
Since $u$ is a barrier at $x_0$, 
Theorem~\ref{thm:barrier} implies that $x_0$ is regular. 
\end{proof}
\begin{theorem}\label{thm:reg-3}
Let $x_0\in\bdyOmegaX$ and let $B$ be a ball such that $x_0\in B$. 
Then \ref{reg-3-reg} implies 
parts 
\ref{reg-3-cont-x0-var}--\ref{reg-3-superharm} below. 
Moreover\textup{,} 
if\/ $\Omega$ is bounded or \p-parabolic\textup{,} 
then parts \ref{reg-3-reg}--\ref{reg-3-superharm} 
are equivalent. 

\begin{enumerate}
\item\label{reg-3-reg} 
The point $x_0$ is regular. 
\item\label{reg-3-cont-x0-var} 
It is true that 
\[
	\lim_{\Omega\ni y\to x_0}\oHp f(y)
	= f(x_0)
\]
for all $f\in\Dp(\Omega)$ such that 
$f(x_0):=\lim_{\Omega\ni y\to x_0}f(y)$ exists. 
\item\label{reg-3-cont-x0} 
It is true that 
\[
	\lim_{\Omega\ni y\to x_0}\oHp f(y)
	= f(x_0)
\]
for all $f\in\Dp(\Omega\cup(B\cap\overline{\Omega}))$ 
such that $f|_{\bdy\Omega\cap B}$ is continuous at $x_0$. 
\item\label{reg-3-superharm} 
It is true that 
\[
	\liminf_{\Omega\ni y\to x_0}f(y)
	\geq f(x_0)
\]
for all $f\in\Dp(\Omega\cup(B\cap\overline{\Omega}))$ 
that are superharmonic in $\Omega$ 
and such that 
$f|_{\bdy\Omega}$ is lower semicontinuous at $x_0$. 
\end{enumerate}
\end{theorem}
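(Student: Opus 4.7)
The implications \ref{reg-3-reg}~$\Rightarrow$~\ref{reg-3-cont-x0-var} and \ref{reg-3-reg}~$\Rightarrow$~\ref{reg-3-cont-x0} follow immediately from Theorem~\ref{thm:reg-2} applied with the trivial obstacle $\psi\equiv-\infty$: the $\K_{-\infty,f}$-obstacle problem is then solved by the $p$-harmonic extension $\oHp f$ (Definition~\ref{def:ext}), and the hypothesis $f(x_0)\geq\cplimsup_{y\to x_0}\psi(y)$ is vacuous.

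For \ref{reg-3-reg}~$\Rightarrow$~\ref{reg-3-superharm} I would localize and invoke Theorem~\ref{thm:reg-obst-prob}. Fix $f$ as in \ref{reg-3-superharm} and choose a ball $B_1=B(x_0,r_1)$ with $\overline{B_1}\subset B$ small enough that $V:=B_1\cap\Omega$ is bounded with $\Cp(X\setm V)>0$. Since $f$ is superharmonic in $V$ (Corollary~9.10 in \cite{BBbook}) and $f\in\Dp(V)$, Lemma~\ref{lem-super-obst-V} identifies $f|_V$ as the lsc-regularized solution of the $\K_{f,f}(V)$-obstacle problem. Locality of regularity (Theorem~\ref{thm:barrier}) gives that $x_0$ is regular with respect to $V$, so Theorem~\ref{thm:reg-obst-prob} applied on $V$ yields
\[
	\liminf_{\Omega\ni y\to x_0} f(y)
	= \liminf_{V\ni y\to x_0} f(y)
	\geq m,
\]
where $m=\sup\{k\in\R:(f-k)_\limminus\in\Dp_0(V;B')\text{ for some ball }B'\ni x_0\}$. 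To bound $m$ from below, fix $\alpha<f(x_0)$; lower semicontinuity of $f|_{\bdy\Omega}$ at $x_0$ yields $r_2\in(0,r_1)$ with $f\geq\alpha$ on $B(x_0,r_2)\cap\bdy\Omega$, so $(f-\alpha)_\limminus$ vanishes pointwise there. A standard gluing argument, combined with the quasicontinuity of Newtonian functions, then shows that $(f-\alpha)_\limminus$, extended by zero outside $V\cap B(x_0,r_2)$, lies in $\Dp_0(V;B(x_0,r_2))$, whence $m\geq\alpha$. Letting $\alpha\to f(x_0)$ gives $m\geq f(x_0)$, as required.

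For the converses, assuming $\Omega$ is bounded or $p$-parabolic, I would feed the distance function $d_{x_0}$ from \eqref{eq-dx0} into each hypothesis and conclude via Theorem~\ref{thm:reg}. Note that $d_{x_0}\in\Dp(X)$ with $d_{x_0}(\infty)=1$ defined, so Theorem~\ref{thm-hansevi2-main-res} gives $\uHp d_{x_0}=\oHp d_{x_0}$. For \ref{reg-3-cont-x0-var}~$\Rightarrow$~\ref{reg-3-reg} and \ref{reg-3-cont-x0}~$\Rightarrow$~\ref{reg-3-reg}, apply the corresponding hypothesis directly to $d_{x_0}$ (whose boundary limit at $x_0$ is $0$) to get $\lim_{\Omega\ni y\to x_0}\uHp d_{x_0}(y)=\lim_{\Omega\ni y\to x_0}\oHp d_{x_0}(y)=0$, and invoke Theorem~\ref{thm:reg}. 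For \ref{reg-3-superharm}~$\Rightarrow$~\ref{reg-3-reg}, set $h:=1-\oHp d_{x_0}$ on $\Omega$, extended by $1-d_{x_0}$ on $X\setm\Omega$; since $\oHp d_{x_0}-d_{x_0}\in\Dp_0(\Omega)$ we have $h\in\Dp(X)$. Moreover $h$ is $p$-harmonic (hence superharmonic) in $\Omega$, $h|_{\bdy\Omega}=1-d_{x_0}$ is continuous, and $h(x_0)=1$, so applying \ref{reg-3-superharm} to $h$ gives $\liminf_{\Omega\ni y\to x_0}(1-\oHp d_{x_0}(y))\geq 1$, i.e.\ $\limsup_{\Omega\ni y\to x_0}\oHp d_{x_0}(y)\leq 0$. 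Combining with nonnegativity of $\oHp d_{x_0}$ then delivers $\lim_{\Omega\ni y\to x_0}\oHp d_{x_0}(y)=0$, and regularity follows from Theorem~\ref{thm:reg}.

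The main technical obstacle is the gluing step in \ref{reg-3-reg}~$\Rightarrow$~\ref{reg-3-superharm}: verifying that $(f-\alpha)_\limminus$, extended by zero across $\bdy\Omega$, indeed belongs to $\Dp_0(V;B(x_0,r_2))$ requires combining the pointwise lower-semicontinuity hypothesis at $x_0$ with the quasicontinuity of Dirichlet-class functions to handle the common boundary.
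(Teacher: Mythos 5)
Your proposal follows essentially the same route as the paper's proof: Theorem~\ref{thm:reg-2} with $\psi\equiv-\infty$ for \ref{reg-3-reg}$\Rightarrow$\ref{reg-3-cont-x0-var} and \ref{reg-3-cont-x0}, localization to $V=B\cap\Omega$ plus Lemma~\ref{lem-super-obst-V} and Theorem~\ref{thm:reg-obst-prob} for \ref{reg-3-reg}$\Rightarrow$\ref{reg-3-superharm}, and the function $d_{x_0}$ together with Theorems~\ref{thm-hansevi2-main-res} and~\ref{thm:reg} for the converses. The only cosmetic difference is that the paper first truncates to $h=\min\{f,\alpha\}$ (again superharmonic) and feeds $h$ into the obstacle machinery, so that the needed membership $h-\alpha\in\Dp_0(V;B(x_0,r))$ is precisely your gluing claim for $(f-\alpha)_\limminus=\alpha-h$; both arguments rest on the same observation that this function vanishes on $B(x_0,r)\cap\bdy V$ by lower semicontinuity at $x_0$.
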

As in Theorems~\ref{thm:reg-obst-cont} and~\ref{thm:reg-2} 
we allow for $f(x_0)=\pm\infty$ in 
\ref{reg-3-cont-x0-var}--\ref{reg-3-superharm}. 
We do not know if \ref{reg-3-reg}--\ref{reg-3-superharm} are 
equivalent when $\Omega$ is \p-hyperbolic.
\begin{proof}
\ref{reg-3-reg} $\imp$ \ref{reg-3-cont-x0-var} 
and \ref{reg-3-reg} $\imp$ \ref{reg-3-cont-x0} 
Apply Theorem~\ref{thm:reg-2} to $f$ (with $\psi\equiv-\infty$). 
Then these implications are immediate 
as $\oHp f$ is the continuous solution of the 
$\K_{-\infty,f}$-obstacle problem. 

\smallskip

\ref{reg-3-reg} $\imp$ \ref{reg-3-superharm} 
Theorem~\ref{thm:barrier} asserts that the point 
$x_0$ is regular with respect to $V:=\Omega\cap B$. 
If $f(x_0)=-\infty$ there is nothing to prove, so assume that 
$f(x_0)\in(-\infty,\infty]$ and let $\alpha<f(x_0)$ be real. 

As $f|_{\bdy\Omega}$ is lower semicontinuous at $x_0$, 
there is $r$ such that $0<r<\dist(x_0,\bdy B)$ and 
$f\geq\alpha$ in $B(x_0,r)\cap\bdy V$. 

Let $h=\min\{f,\alpha\}$, 
which is also superharmonic in $\Omega$, by Lemma~9.3 in \cite{BBbook}. 
It thus follows from Lemma~\ref{lem-super-obst-V} that 
$h$ is the lsc-regularized solution of the
$\K_{h,h}(V)$-obstacle problem. 
Since $h-\alpha=0$ in $B(x_0,r)\cap\bdy V$, 
we have 
\[
	h-\alpha \in\Dp_0(V;B(x_0,r)).
\]
By applying 
Theorem~\ref{thm:reg-obst-prob} 
with $h$ and $V$ in the place of $f=\psi$ and $\Omega$, respectively, 
we see that $m\geq\alpha$, 
where $m$ is as in Theorem~\ref{thm:reg-obst-prob}, 
and hence 
\[
	\liminf_{\Omega\ni y\to x_0}f(y)
	= \liminf_{V\ni y\to x_0}f(y)
	\geq \liminf_{V\ni y\to x_0}h(y)
	= m
	\geq\alpha
	.
\]
Letting $\alpha\to f(x_0)$ yields the desired result.

\bigskip

We now assume that $\Omega$ is bounded or \p-parabolic. 

\smallskip

\ref{reg-3-cont-x0-var} $\imp$ \ref{reg-3-reg} and 
\ref{reg-3-cont-x0} $\imp$ \ref{reg-3-reg} 
Observe that the function $d_{x_0}$ in Theorem~\ref{thm:reg} 
satisfies the conditions for $f$ in both 
\ref{reg-3-cont-x0-var} and \ref{reg-3-cont-x0}. 
Theorem~\ref{thm-hansevi2-main-res} 
applies to $d_{x_0}$, 
and hence it follows that $x_0$ is regular, 
by Theorem~\ref{thm:reg}, as 
\[
	\lim_{\Omega\ni y\to x_0}\uHp d_{x_0}(y)
	= \lim_{\Omega\ni y\to x_0}\oHp d_{x_0}(y)
	= d_{x_0}(x_0)
	= 0.
\]

\smallskip

\ref{reg-3-superharm} $\imp$ \ref{reg-3-reg} 
Let 
\[
	f
	= \begin{cases}
		\oHp d_{x_0} & \textup{in }\Omega, \\
		d_{x_0} & \textup{on }\bdy\Omega.
	\end{cases}
\]
Because both $f$ and $-f$ satisfy the hypothesis in \ref{reg-3-superharm}, 
we have 
\[
	0
	= f(x_0)
	\leq \liminf_{\Omega\ni y\to x_0}f(y)
	= \liminf_{\Omega\ni y\to x_0}\oHp d_{x_0}(y)
\]
and
\[
	\limsup_{\Omega\ni y\to x_0}\oHp d_{x_0}(y)
	= -\liminf_{\Omega\ni y\to x_0}(-f(y))
	\leq f(x_0)
	= 0.
\]
Theorem~\ref{thm-hansevi2-main-res} 
implies that 
$\oHp d_{x_0}=\uHp d_{x_0}$, 
and hence 
\[
	0
	\leq \liminf_{\Omega\ni y\to x_0}\uHp d_{x_0}(y)
	\leq \limsup_{\Omega\ni y\to x_0}\uHp d_{x_0}(y)
	\leq 0,
\]
which shows that $\lim_{\Omega\ni y\to x_0}\uHp d_{x_0}(y)=0$. 
Thus $x_0$ is regular by Theorem~\ref{thm:reg}.
\end{proof}
The following two results remove the assumption of bounded sets 
from the \p-harmonic versions of 
Lemma~7.4 and Theorem~7.5 in Bj\"orn~\cite{ABcluster} 
(or Theorem~11.27  and Lemma~11.32 in \cite{BBbook}).
\begin{theorem}\label{thm:components-gen}
If $x_0\in\bdyOmegaX$ is irregular with respect to $\Omega$\textup{,} 
then there is exactly one component $G$ of\/ $\Omega$ with $x_0\in\bdy G$ 
such that $x_0$ is irregular with respect to $G$.
\end{theorem}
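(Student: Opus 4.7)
The plan is to reduce the statement to the corresponding bounded result (Theorem~11.27 in \cite{BBbook}) by means of the locality of regularity. Fix a ball $B=B(x_0,r)$; since $\Cp(X\setm(\Omega\cap B))\geq\Cp(X\setm\Omega)>0$, the bounded open set $\Omega\cap B$ falls within our framework. By the locality established in Theorem~\ref{thm:barrier} (the equivalence \ref{barrier-reg-Om}$\Leftrightarrow$\ref{barrier-reg-B}), $x_0$ is irregular with respect to $\Omega$ if and only if it is irregular with respect to $\Omega\cap B$. The same equivalence applied to any component of $\Omega$ shows that irregularity transfers between a component $G$ and $G\cap B$.

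For the existence of a bad component, the bounded version of the theorem applied to $\Omega\cap B$ produces a component $H$ of $\Omega\cap B$ with $x_0\in\bdy H$ such that $x_0$ is irregular with respect to $H$. Let $G$ be the (unique) component of $\Omega$ containing $H$. Since $x_0\in\bdy H\subset\itoverline{G}$ and $x_0\in\bdy\Omega$ forces $x_0\notin G$, we have $x_0\in\bdy G$. As $H\subset G$ is open with $x_0\in\bdy H$, the contrapositive of Proposition~\ref{prop-subset} shows that $x_0$ is irregular with respect to $G$.

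For uniqueness, suppose that $G'$ is a component of $\Omega$ distinct from $G$, with $x_0\in\bdy G'$ and $x_0$ irregular with respect to $G'$. By locality, $x_0$ is irregular with respect to $G'\cap B$, and applying the bounded existence result to this set produces a component $H'$ of $G'\cap B$ with $x_0\in\bdy H'$ at which $x_0$ is irregular. Since $G'$ is a component of $\Omega$, components of $G'\cap B$ are components of $\Omega\cap B$, and $H'\ne H$ because $G\ne G'$. This yields two distinct components of $\Omega\cap B$ at which $x_0$ is irregular, contradicting the uniqueness part of the bounded theorem.

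The only bookkeeping issue, and arguably the main obstacle, is that a single component of $\Omega$ may split into several components when intersected with the ball $B$, so components of $\Omega$ and of $\Omega\cap B$ need not be in one-to-one correspondence. This is circumvented by applying the bounded theorem directly to each $G'\cap B$ in the uniqueness step, instead of trying to relate $\Omega$-components to $\Omega\cap B$-components in a single sweep.
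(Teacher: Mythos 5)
Your proof is correct, but it takes a genuinely different route from the paper's. The paper argues directly: it picks a sequence $y_j\to x_0$ in $\Omega$ along which $\uHp_\Omega d_{x_0}(y_j)$ tends to its (positive) upper limit, shows via Lemma~\ref{lem:disjoint} that only finitely many components of $\Omega$ can meet the sequence (otherwise it splits them into two disjoint open unions, both of which would make $x_0$ irregular), extracts a single component $G$ containing a subsequence and concludes that $x_0$ is irregular with respect to $G$; uniqueness again comes from Lemma~\ref{lem:disjoint}. You instead use the bounded version of the theorem as a black box on $\Omega\cap B$ and on $G'\cap B$, transferring irregularity back and forth with the locality statement of Theorem~\ref{thm:barrier} and the restriction result Proposition~\ref{prop-subset}, and you correctly handle the fact that a component of $\Omega$ may split inside $B$ by observing that the components of $G'\cap B$ are precisely the components of $\Omega\cap B$ contained in $G'$. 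Both arguments ultimately rest on the locality established in Theorem~\ref{thm:barrier}, since the paper's Lemma~\ref{lem:disjoint} is derived from the sufficiency part of the Wiener criterion, whose validity for unbounded sets is itself a consequence of locality. Your reduction is shorter and avoids redoing the sequence and pigeonhole argument; the paper's version is self-contained modulo Lemma~\ref{lem:disjoint} and does not need the bounded theorem itself. One small point worth making explicit: Proposition~\ref{prop-subset} and Theorem~\ref{thm:barrier} are applied with $G$, $G'$, and $H$ in the role of $\Omega$, which is legitimate because $\Cp(X\setm G)\geq\Cp(X\setm\Omega)>0$, and likewise for the other sets.
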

\begin{lemma}\label{lem:disjoint}
Suppose that\/ $\Omega_1$ and\/ $\Omega_2$ are 
nonempty disjoint open subsets of $X$. 
If $x_0\in(\bdy\Omega_1\cap\bdy\Omega_2)\setm\{\infty\}$\textup{,} 
then $x_0$ is regular with respect to 
at least one of these sets. 
\end{lemma}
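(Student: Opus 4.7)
The plan is to reduce the claim to the previously known case of bounded open sets by exploiting the locality of regularity established in Theorem~\ref{thm:barrier}. Since $x_0\in X$ and $X$ is proper, I can fix any ball $B=B(x_0,r)$ with $r>0$, and then work with the two intersected sets $\Omega_1\cap B$ and $\Omega_2\cap B$.

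First I would verify that these intersected sets satisfy our standing assumptions and the hypotheses of the bounded version of the lemma. Both $\Omega_i\cap B$ are bounded open sets; they are disjoint because $\Omega_1\cap\Omega_2=\emptyset$; and
\[
\Cp(X\setm(\Omega_i\cap B))\geq\Cp(X\setm\Omega_i)>0
\]
by the standing assumption. Furthermore, since $x_0\in\bdy\Omega_i\cap B$ and $\Omega_i$ is open with $x_0\notin\Omega_i$, every neighborhood of $x_0$ meets $\Omega_i\cap B$ while $x_0$ itself does not belong to $\Omega_i\cap B$; hence $x_0\in\bdy(\Omega_i\cap B)$ for $i=1,2$.

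Next I would apply the bounded version of the statement (Lemma~7.4 in Bj\"orn~\cite{ABcluster}, or equivalently Lemma~11.32 in \cite{BBbook}) to the disjoint bounded open sets $\Omega_1\cap B$ and $\Omega_2\cap B$ with common boundary point $x_0$. This yields that $x_0$ is regular with respect to at least one of them; say $x_0$ is regular with respect to $\Omega_i\cap B$. Finally, invoking the equivalence \ref{barrier-reg-Om} $\Leftrightarrow$ \ref{barrier-reg-B} in Theorem~\ref{thm:barrier} (locality of regularity), $x_0$ is therefore also regular with respect to $\Omega_i$, which proves the lemma.

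There is no genuine obstacle in this argument: the entire nontrivial content has already been absorbed into Theorem~\ref{thm:barrier}, whose extension to unbounded sets was the main work of the paper. The only minor points to verify are the technical hypotheses $\Cp(X\setm(\Omega_i\cap B))>0$ and $x_0\in\bdy(\Omega_i\cap B)$, both of which are immediate from the disjointness and the location of $x_0$.
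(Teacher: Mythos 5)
Your proof is correct. The paper itself disposes of this lemma in one sentence, by invoking the sufficiency part of the Wiener criterion: since $\Omega_1\cap\Omega_2=\emptyset$, the complements $X\setm\Omega_1$ and $X\setm\Omega_2$ cover $X$, so by subadditivity of the capacity at least one of them satisfies the Wiener condition at $x_0$, and that condition implies regularity for unbounded sets precisely because of the locality established in Theorem~\ref{thm:barrier}. Your route replaces the Wiener criterion by the bounded-case version of the lemma itself (Lemma~7.4 in \cite{ABcluster}, Lemma~11.32 in \cite{BBbook}) applied to $\Omega_1\cap B$ and $\Omega_2\cap B$, and then transfers back via the equivalence \ref{barrier-reg-Om} $\Leftrightarrow$ \ref{barrier-reg-B} of Theorem~\ref{thm:barrier}. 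The two arguments are essentially the same idea packaged differently --- both rest entirely on the locality theorem plus a known bounded-case result --- but yours is slightly more modular in that it never needs to articulate that the Wiener sufficiency extends to unbounded sets. One small correction: the positivity $\Cp(X\setm\Omega_i)>0$ is not ``the standing assumption'' (which concerns the fixed set $\Omega$, not $\Omega_1,\Omega_2$); rather it is automatic here because $X\setm\Omega_1\supset\Omega_2$ is a nonempty open set and hence has positive capacity, and likewise for $\Omega_2$. This point is worth stating, since without it the notion of regularity for $\Omega_i$ is not even defined in this framework.
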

The lemma follows directly from 
the sufficiency part of the Wiener criterion, 
see \cite{ABcluster} or \cite{BBbook}. 
With straightforward 
modifications of the proof of 
Theorem~7.5 in \cite{ABcluster} 
(or Theorem~11.27 in \cite{BBbook}), 
we obtain a proof for Theorem~\ref{thm:components-gen}. 
For the reader's convenience, 
we give the proof here.
\begin{proof}[Proof of Theorem~\ref{thm:components-gen}]
Suppose that $x_0\in\bdyOmegaX$ is irregular. 
Then Theorem~\ref{thm:reg} implies that 
\[
	\limsup_{\Omega\ni y\to x_0}\uHp d_{x_0}(y)
	> 0.
\]
Let $\{y_j\}_{j=1}^\infty$ be a sequence in $\Omega$ 
such that 
\[
    \lim_{j\to\infty}y_j 
    = x_0
	\quad\text{and}\quad
	\lim_{j\to\infty}\uHp d_{x_0}(y_j)
	= \limsup_{\Omega\ni y\to x_0}\uHp d_{x_0}(y).
\]

Assume that there are infinitely many components of $\Omega$ 
containing points from the sequence $\{y_j\}_{j=1}^\infty$. 
Then we can find a subsequence $\{y_{j_k}\}_{k=1}^\infty$ 
such that each component of $\Omega$ contains at most one 
point from the sequence $\{y_{j_k}\}_{k=1}^\infty$. 
Let $G_k$ be the component of $\Omega$ containing $y_{j_k}$, $k=1,2,\ldots$\,. 
Then 
\[
	\lim_{k\to\infty}\uHp d_{x_0}(y_{j_{2k}})
	= \lim_{k\to\infty}\uHp d_{x_0}(y_{j_{2k+1}})
	> 0,
\]
and thus $x_0$ is irregular both with respect to 
$\Omega_1:=\bigcup_{k=1}^\infty G_{2k}$ and with respect to 
$\Omega_2:=\bigcup_{k=1}^\infty G_{2k+1}$, 
by Theorem~\ref{thm:reg}. 
Since $\Omega_1$ and $\Omega_2$ are disjoint, 
this contradicts Lemma~\ref{lem:disjoint}. 
We conclude that there are only finitely many components of $\Omega$ 
containing points from the sequence $\{y_j\}_{j=1}^\infty$. 

Thus there is a component $G$ 
that contains infinitely many of the points 
from the sequence $\{y_j\}_{j=1}^\infty$. 
So there is a subsequence $\{y_{j_k}\}_{k=1}^\infty$ 
such that $y_{j_k}\in G$ for every $k=1,2,\ldots$\,. 
It follows that $x_0\in\bdy G$ and as 
\[
	\lim_{k\to\infty}\uHp d_{x_0}(y_{j_{k}})
	> 0,
\]
$x_0$ must be irregular with respect to $G$. 

Finally, if $G'$ is any other component of $\Omega$ with $x_0\in\bdy G'$, 
then, by Lemma~\ref{lem:disjoint}, $x_0$ is regular with respect to $G'$.
\end{proof}


\end{document}